\newtheorem{theorem}{\sc Theorem}[section]
\newtheorem{thm}[theorem]{\sc Theorem}
\newtheorem{lem}[theorem]{\sc Lemma}
\newtheorem{defin}[theorem]{\sc Definition}
\newtheorem{prop}[theorem]{\sc Proposition}
 \newtheorem*{thmA}{Theorem A}
 \newtheorem*{thmB}{Theorem B}
 \newtheorem*{thmC}{Theorem C}
 \newtheorem*{thmD}{Theorem D}
\title{On self-similarity of finitely generated torsion-free nilpotent groups}
\author{Adilson Berlatto}
\address{Departamento de Matem\'atica, Universidade Federal de Mato Grosso,
Pontal do Araguaia-MT, 78698-000 Brazil}
\email{(Berlatto) adilson.berlatto@ufmt.br}
\author{Tulio Gentil}
\address{Departamento de Matem\'atica, Universidade Federal do Rio de Janeiro
Rio de Janeiro-RJ, 21941-909 Brazil}
\email{(Gentil) tuliosantosufg@gmail.com}
\subjclass[2020]{20E08;  20B27.}
\keywords{}
\begin{document}
\maketitle

\begin{abstract}
This paper aims to investigate the self-similarity property in finitely-generated torsion-free nilpotent groups. We establish connections between geometric equivalence and self-similarity in these groups. Moreover, we show that any 2-generated torsion-free nilpotent group of class 3 has a faithful representation as a transitive self-similar group. Additionally, we provide an example of a 4-generated torsion-free nilpotent group of class 3 that does not admit such a representation. This example, sourced from the work of V. Bludov and B. Gusev in \cite{BG}, resolves a question posed by S. Sidki in \cite{LectureDusseldorf}.
\end{abstract}

\section{Introduction}

A \textit{self-similar} group is a group $G$ that admits a faithful state-closed representation on the regular one-rooted $m$-tree $\mathcal{T}_m$.  If $G$ does not admit such a representation for any $m$ then we say that $G$ is not self-similar.

One approach to generating self-similar groups involves virtual endomorphisms. Let $G$ be a group and $$\mathbf{H=} \left( H_{i} \leqslant G \mid \left[ G:H_{i}\right] =m_{i}\text{ }
\left( \text{ }1\leq i\leq s\right) \right)$$ be a $s$-sequence of subgroups of finite index in $G$ and consider a $s$-sequence
$\mathbf{F}=\left(f_{i}:H_{i}\rightarrow G \,\, \text{}\mid 1\leq i\leq s\right) \text{}$ of homomorphisms (called \textit{virtual endomorphisms}). This setup produces a homomorphism from $G$ to the group $\mathcal{A}_m$ of automorphisms of $\mathcal{T}_m$, where $m=m_1+\dots+m_s$. The kernel $K$ of this homomorphism is the biggest subgroup of the intersection $ \cap _{i=1}^{s}H_{i}$ which is normal in $G$ and $f_i$-invariant for all $i=1,\dots,s$. This subgroup $K$ is referred to as the $\mathbf{F}$-\textit{core of} $\mathbf{H}$.
The group $G$ is self-similar if and only if there exist a pair  $(\mathbf{H},\mathbf{F})$ with $K$ trivial; if in addition, $s=1$ for some such pair  $(\mathbf{H},\mathbf{F})$, $G$ is  said to be \textit{transitive} self-similar; otherwise $G$ is called \textit{intransitive} self-similar.

V. Nekrashevych and S. Sidki classified the free abelian groups of finite rank that act as self-similar groups on the binary tree $\mathcal{T}_2$ \cite{NS}. In particular, they demonstrated that a free abelian group of finite rank can be represented as a self-similar subgroup of the automorphism group of the binary tree.

Following P. Hall's notations, we denote the finitely generated torsion-free nilpotent groups of class $c$ by $\mathfrak{T}_c$ and we delete the index $c$ of the notation for finitely generated torsion-free nilpotent group. In \cite{AS}, it was demonstrated that $\mathfrak{T}_2$-groups  exhibit rich self-similarity properties: \textit{if $G$ is a $\mathfrak{T}_2$-group and $H$ is a subgroup of finite index in $G$, then there exist a subgroup $K$ of finite index in $H$ which admits a recurrent endomorphism $f:K \to G$} (we refer to the endomorphism $f$ as \textit{recurrent} if it is an epimorphism) with a trivial $f$-core of $K$. As a consequence, every $\mathfrak{T}_2$-group is transitive and self-similar. Recently, we proved in \cite{BDS} that any free nilpotent group of finite rank has a transitive recurrent self-similar action.

The abundance of self-similar representations in these groups prompted the following definition:

\begin{defin} 
A group $G$ is said to be \textit{strongly self-similar (strongly recurrent self-similar)} if for every finite-index subgroup $H$ of $G$, there exists a finite-index subgroup $K$ of $H$ and a simple virtual endomorphism (simple recurrent virtual endomorphism) $f:K \rightarrow G$.  
\end{defin}

The $\mathfrak{T}_2$-groups and the free nilpotent groups of finite rank are strongly recurrent self-similar groups. Another important example of a strongly recurrent self-similar $\mathfrak{T}_c$-group is $Tr_1(c+1, \mathbb{Z})$, the (upper) unitriangular group of dimension $c+1$ over $\mathbb{Z}$ (refer to Proposition \ref{Unit}). These groups exhibit two properties that we will discuss here and share an intimate relationship with recurrent self-similarity: the existence of isomorphic proper subgroups and geometric equivalence with their Malcev closures.

In \cite{LectureDusseldorf}, S. Sidki asked about the existence of $\mathfrak{T}_3$-groups which are not self-similar (Problem 10). In this context we prove:

\begin{thmA}
    Every 2-generated $\mathfrak{T}_3$-group is strongly recurrent self-similar.
\end{thmA}

Recall that a group $G$ is \textit{compressible} if whenever $H$ is a subgroup of finite
index in $G$ then there exists a subgroup $K$ of finite index in $H$ with $K$ isomorphic
to $G$. So, strongly recurrent self-similar $\mathfrak{T}$-groups are compressible. In other direction, a group $G$ is said to be \textit{cohopfian} if every injective endomorphism of $G$ is an automorphism.

Two groups $G_1$ and $G_2$ are \textit{commensurable} if $G_1 \cap G_2$ has finite index in both $G_1$ and $G_2$. Given a group $G$, let us denote by $G^{\ast}$ its Malcev completion. It is known that $G_1$ and $G_2$ are commensurable if and only if $G_1^{\ast} \simeq G_2^{\ast}$. In \cite{smith}, it was proven that if $G_1$ and $G_2$ are $\mathfrak{T}$-groups such that $G_1^{\ast} \simeq G_2^{\ast}$, then $G_1$ is compressible if and only if $G_2$ is compressible. We extend this result for strongly recurrent self-similar $\mathfrak{T}$-groups as follows:

\begin{thmB}
    Let $G_1$ and $G_2$ be $\mathfrak{T}_c$-groups such that $G_1^{\ast} \simeq G_2^{\ast}$. Then $G_1$ is strongly recurrent self-similar if, and only if, $G_2$  is strongly recurrent self-similar.
\end{thmB}

The notion of geometric equivalence of groups was introduced by B. Plotkin in \cite{BP}. In \cite{BP2}, B. Plotkin, E. Plotkin, and A. Tsurkov proved that $Tr_1(n \mathbb{Z})$ is geometrically equivalent to its Malcev completion. A. Tsurkov \cite{AT}, also demonstrated that every $\mathfrak{T}_2$-group is geometrically equivalent to its Malcev completion. In general, the following result establishes that a strongly recurrent self-similar $\mathfrak{T}$-group is geometrically equivalent to its Malcev completion.

\begin{thmC}
Strongly recurrent self-similar  $\mathfrak{T}$-groups are geometrically equivalent with their Malcev completions.

\end{thmC}

A group acting
on the tree $\mathcal{T}_m$ is \textit{finite-state}  if each of its elements has a finite number of states. In the study conducted by \cite{BS}, it was proved that the affine group $\mathbb{Z}^n \rtimes GL(n, \mathbb{Z})$ is a finite-state  and self-similar, yet it is still unkown if a subgroup of $\mathcal{A}_m$, where $m=2^n$. As a $\mathfrak{T}$-group is isomorphic to a subgroup of $GL(n, \mathbb{Z})$, it follows that every $\mathfrak{T}$-group is a finite-state group. However, there exist $\mathfrak{T}$-groups that do not admit a self-similar representation.

J. Dyer \cite{JD}, constructed a $2$-generated $\mathfrak{T}_6$-group with Hirsh length 9, which does not admit a
faithful transitive self-similar representation. More recently, using Lie algebras constructed by Y. Benoist \cite{YB}, O. Mathieu exhibited a $\mathfrak{T}$-group with a cyclic center and Hirsch length 12 that is not self-similar \cite{OM}.

In \cite{BG}, V. Bludov and B. Gusev introduced
the $\mathfrak{T}_3$-group $N_{3,4}$ generated by 4 elements, the $\mathfrak{T}_4$-group $N_{4,3}$ with 3 generators, and the $\mathfrak{T}_5$-group $N_{5,2}$ generated by 2 elements and proved these groups to be geometrically nonequivalent to their respective Malcev closures yet are strongly incompressible $\mathfrak{T}$-groups. A group $G$ is called strongly incompressible (see, \cite{Noskov}) if, for all $\sigma \in \text{End}(G)$, it holds that $\sigma \in \text{Aut}(G)$ or $Z(G) \leqslant \text{ker}(\sigma)$. In fact, Noskov \cite{Noskov} established that strongly incompressible $\mathfrak{T}$-groups are not geometrically equivalent to their Malcev closures. This already implies that strongly incompressible $\mathfrak{T}$-groups are not strongly recurrent self-similar. However, concluding that these groups are not self-similar requires a few additional considerations.


Utilizing the results from \cite{BG}, we address the problem posed by S. Sidki in \cite{LectureDusseldorf}, providing a detailed proof that the group $N_{3,4}$ cannot admit a faithful self-similar representation, whether transitive or not.

\begin{thmD}
The group $N_{3,4}$ cannot admits a faithful state-closed representation in $\mathcal{T}_m$ for any $m$.
\end{thmD}
 
An important consequence of the theorem is noteworthy: the minimal class $c$ for which  examples of non-self-similar $\mathfrak{T}_c$-groups exist, whether transitive or not, is 3. However, an open question remains regarding the smallest number of generators for which a $\mathfrak{T}_3$-group is not self-similar.

We conclude this section with two problems:

\noindent \textbf{Problem 1:} Are there $\mathfrak{T}_3$-groups generated by only three elements that are not self-similar?

\noindent \textbf{Problem 2:} Can a co-hopfian $\mathfrak{T}$-group, which is not geometrically equivalent to its Malcev closure, be self-similar?

\section{Preliminaries}
\subsection{Self-similar groups and virtual endomorphisms}
Let $Y=\{0,\dots,m-1\}$ be a finite alphabet with $m\geq 1$ letters. The set of finite words $Y^*$ over $Y$ has a natural structure
of a rooted $m$-ary tree, denoted by $\mathcal{T}_m$. 

\noindent The automorphism group $\mathcal{A}_{m} ={Aut}\left( \mathcal{T}_m\right) $
of $\mathcal{T}_{m}$ is isomorphic to the restricted wreath product
recursively defined as $\mathcal{A}_{m}=\mathcal{A}_{m}\wr Perm(Y)$. An automorphism $\alpha $ of $%
\mathcal{T}_{m}$ has the form $\alpha =(\alpha _{0},\dots,\alpha _{m-1})\sigma
(\alpha )$, where the state $\alpha _{i}$ belongs to $\mathcal{A}_{m}$ and $\sigma :\mathcal{A}_{m}\rightarrow Perm(Y)$ is the permutational
representation of $\mathcal{A}_{m}$ on $Y$, the first level of the tree $%
\mathcal{T}_{m}$.
The action of $\alpha =(\alpha _{0},\dots,\alpha _{m-1})\sigma
(\alpha ) \in \mathcal{A}_m$ on a word $y_{1}y_{2}\cdots y_{n}$ of length $n$ is given recursivelly by  $\left( y_{1}\right) ^{\sigma (\alpha
)}\left( y_{2}\cdots y_{n}\right) ^{\alpha _{y_{1}}}$.

\noindent Given an element $\alpha=(\alpha _{0},\dots,\alpha _{m-1})\sigma
(\alpha )$ belongs to  ${Aut}\left( \mathcal{T}_m\right) $, the set of automorphisms%
\begin{equation*}
Q(\alpha )=\{\alpha\}\cup Q(\alpha _{0}) \cup Q(\alpha _{1}) \cup \cdots \cup Q(\alpha _{m-1})
\end{equation*}%
is called the set of \textit{states} of $\alpha $. A subgroup 
$G$ of $\mathcal{A}_{m}$ is \textit{self-similar} (or \textit{state-closed}) if $Q(\alpha )$ is a subset of $G$ for all $\alpha $ in $G$ and is \textit{transitive} if its action on the first level of the tree is transitive. 


A \textit{virtual endomorphism} of an abstract group $G$ is a homomorphism $f:H \to G$ from a subgroup $H\leqslant G$ of finite index to $G$. Let $G$ be a group and consider the $s$-sets
\begin{equation*}
\mathbf{H=} \left( H_{i} \leqslant G \mid \left[ G:H_{i}\right] =m_{i}\text{ }
\left( \text{ }1\leq i\leq s\right) \right) \text{,}
\end{equation*}
\begin{equation*}
\mathbf{m=}\left( m_{1},\dots,m_{s}\right) ,\text{ }m=m_{1}+\cdots+m_{s},
\end{equation*}
\begin{equation*}
\mathbf{F}=\left\{ f_{i}:H_{i}\rightarrow G \,\, \text{virtual endomorphisms}\mid 1\leq i\leq s\right\} \text{;
}
\end{equation*}
we will call  $\left( \mathbf{m},\mathbf{H,F}\right) $ a $G$-data or data for $G$. The $\mathbf{F}$-core of $\mathbf{H}$ is the biggest subgroup of $ \cap _{i=1}^{s}H_{i}$ which normal in $G$ and $f_i$ invariant for all $i=1,\cdots,s$.

In \cite{DSS} the following approach was constructed to produce self-similar groups, transitive or not. 

\begin{prop}
A subgroup $G$ of $\mathcal{A}_m$ admits a faithful state-closed
representation if and only if there exist a $G$-data $\left( \textbf{m},\textbf{H,F}\right) $ such that the $\mathbf{F}$-core of $\mathbf{H}$  is trivial, that is,
\begin{equation*}
\langle K\leqslant \cap _{i=1}^{s}H_{i}\mid K\vartriangleleft
G,K^{f_{i}}\leqslant K,\forall i=1,\dots,s\rangle =1\text{.}
\end{equation*}
\end{prop}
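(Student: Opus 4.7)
The plan is to prove both implications by making the correspondence between faithful state-closed representations and systems of virtual endomorphisms explicit, via orbits and stabilizers on the first level of $\mathcal{T}_m$.

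For the forward direction, suppose $G\leqslant \mathcal{A}_m$ is self-similar. I would decompose the action of $G$ on the first level $Y=\{0,\dots,m-1\}$ into orbits $O_1,\dots,O_s$ of sizes $m_1,\dots,m_s$. For each $i$ I fix a representative $y_i\in O_i$ and set $H_i=\mathrm{Stab}_G(y_i)$, so $[G:H_i]=m_i$. Since $G$ is state-closed, the map $f_i\colon H_i\to G$ sending $h\in H_i$ to its state $h_{y_i}$ at $y_i$ is a well-defined virtual endomorphism landing inside $G$; together the $f_i$ yield the required data $(\mathbf{m},\mathbf{H},\mathbf{F})$.

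The crux is then to show that the $\mathbf{F}$-core $K$ of $\mathbf{H}$ is trivial. I would argue that $K$ must act trivially on the whole tree $\mathcal{T}_m$. Since $K\vartriangleleft G$ and $K\leqslant H_i$, for any $g\in G$ one has $g^{-1}Kg\leqslant H_i$, so every $k\in K$ fixes $g(y_i)$; as $i$ and $g$ vary, $K$ fixes the entire first level. Moreover, the state of $k$ at a vertex $y=g(y_i)$ is, up to a factor coming from the state of $g$, an $f_i$-image of an element of $K$, and hence again lies in $K$ by $f_i$-invariance. A routine induction on the tree-level shows that every state at every level of every $k\in K$ lies in $K$ and thus fixes the next level; so $K$ acts trivially, and by faithfulness $K=1$.

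For the reverse direction, given a $G$-data with trivial $\mathbf{F}$-core, I would construct $\varphi\colon G\to \mathcal{A}_m$ in the standard recursive manner. Choose transversals $T_i=\{t_{i,1},\dots,t_{i,m_i}\}$ of $H_i$ in $G$ and label the first level of $\mathcal{T}_m$ by $T_1\sqcup\cdots\sqcup T_s$. On the block $T_i$, $\varphi(g)$ is defined to act as the permutation $\sigma$ induced by left multiplication of $g$ on $G/H_i$, with state at position $t_{i,j}$ equal to $\varphi\bigl(f_i(t_{i,\sigma(j)}^{-1}\, g\, t_{i,j})\bigr)$. The image is state-closed by construction, and identifying $\ker\varphi$ with the $\mathbf{F}$-core forces it to be trivial.

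The main subtlety is not any deep structural argument but the verification that the recursive formula genuinely defines a homomorphism into $\mathcal{A}_m$ (independent of transversal choices and multiplicative), and the two inclusions needed to identify $\ker\varphi$ with the $\mathbf{F}$-core: any normal subgroup of $G$ contained in $\bigcap_i H_i$ and preserved by every $f_i$ acts trivially on every level (by the same induction used above), and conversely $\ker\varphi$ manifestly satisfies these three properties. This bookkeeping is where care is required.
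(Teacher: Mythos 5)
Your argument is correct and is exactly the standard orbit--stabilizer/state-map correspondence that the paper itself does not reprove but imports from the cited reference \cite{DSS}: stabilizers of orbit representatives with their state maps give the data, normality plus $f_i$-invariance force the core to act trivially level by level, and conversely the recursive coset construction has kernel equal to the $\mathbf{F}$-core. The only point to make fully explicit is that in the forward direction the state of $k\in K$ at a vertex $g(y_i)$ is a conjugate (by a state of $g$, which lies in $G$ by state-closedness) of $f_i(g^{-1}kg)$, so you need normality of $K$ in $G$ again at that step, not just $f_i$-invariance; your phrase ``up to a factor coming from the state of $g$'' covers this, but it is a conjugation and should be named as such.
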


\subsection{Coset tree}

Let $G$ be a group, $H$ a proper subgroup of $G$ of finite index $m$ and $\psi: G \longrightarrow H$ an isomorphism. Consider $T = \{ t_j \; | \; 1 \leq j \leq m\}$ a right transversal for  $H=G^{\psi}$ in $G$, with  $1 \in T$. Then $T^{\psi^i} = \{ t_j^{\psi^i} \; | \; 1 \leq j \leq m \}$ is a right transversal for  $G^{\psi^{i+1}}$ in $G^{\psi^i}$, for each $i \geq 1$. So, we produce an infinite chain of subgroups of $G$:
\begin{equation}
G \geq G^{\psi} \geq G^{\psi^2} \geq \cdots \geq G^{\psi^i} \geq G^{\psi^{i+1}} \geq \cdots.
\label{cadeiapsi}
\end{equation}
This procedure defines a regular tree $T_G$ with root $G$ and vertices  
$$
G^{\psi^n} t_{i_n}^{\psi^{n-1}} t_{i_{n-1}}^{\psi^{n-2}} \cdots t_{i_2}^{\psi} t_{i_1}, 
$$
where $n \geq 1$ and $1 \leq i_1, \ldots , i_n \leq m$. The edges are defined by inclusion over the vertices. This tree is named \textit{Coset Tree} of $G$ (see \cite{Sidki}) induced by the chain (\ref{cadeiapsi}). It can be visualized in figure below.

\begin{figure}
  \label{cosettree}
\begin{center}
\begin{tikzpicture}
[level distance=1.5cm,
  level 1/.style={sibling distance=4.5cm},
  level 2/.style={sibling distance=1.5cm},
  level 3/.style={sibling distance=0.3cm}]
  \node {$G$}
    child {node {$G^{\psi}$}
      child {node {{$G^{{\psi}^2}$}}
      child {node {$ $}}
      child {node {$ \cdots$}}
      child {node {$ $}}}
      child {node {{$G^{{\psi}^2}t_2^{\psi} \,\,...$}}
      child {node {$ $}}
      child {node {$\cdots $}
      }
      child {node {$ $}}}
      child {node {{$G^{{\psi}^2}t_m^{\psi}$}}
        child {node {$ $}}
        child {node {$ \cdots $}}
      child {node {$ $}}
    }
    }
    child {node {\,\,\,\,\,\,\,\,\,\,\,\,\,\,\,\,\,\,$G^{\psi}t_2 \,\,\,\,\,\,\,\,\,\,\, \cdots$}
       child {node {{$G^{{\psi}^2}t_2$}}
      child {node {$ $}}
      child {node {$ \cdots$}}
      child {node {$ $}}}
      child {node {{$ ...$}}
      child {node {$ $}}
      child {node {$\cdots $}
      }
      child {node {$ $}}}
      child {node {{$G^{{\psi}^2}{t_m}^{\psi}t_2$}}
        child {node {$ $}}
        child {node {$ \cdots $}}
      child {node {$ $}}
    }
    }
    child {node {$G^{\psi}t_m$}
     child {node {{$G^{{\psi}^2}t_m$}}
      child {node {$ $}}
      child {node {$ \cdots$}}
      child {node {$ $}}}
      child {node {{$...$}}
      child {node {$ $}}
      child {node {$\cdots $}
      }
      child {node {$ $}}}
      child {node {{$G^{{\psi}^2}{t_m}^{\psi}t_m$}}
        child {node {$ $}}
        child {node {$ \cdots $}}
      child {node {$ $}}
    }
    };
  
\end{tikzpicture}
\end{center}
\end{figure}

The group $G$ acts on $T_G$ by right multiplication and this action produces a transitive self-similar representation $\lambda: G \longrightarrow Aut(T_G)$, defined by
$$
g^{\lambda} = 
\left( 
\left(
t_j g t_{j'}^{-1}
\right)^{ \psi^{-1} \lambda}
\right)_{1 \leq j \leq m}
\sigma(g),
$$
where $j' = (j) \sigma(g)$ and $\sigma$ is the action on the transversal $T$. Observe that $ker \lambda \leqslant \bigcap_{i \geq 0} G^{\psi^i}$. So, when  $\bigcap_{i \geq 0} G^{\psi^i} = 1$, the action is faithful.

\section{Strongly Self-similar $\mathfrak{T}$-groups}

It is known that $\mathfrak{T}_2$-groups and free nilpotent groups of finite rank are strongly recurrent self-similar (see \cite{AS} and \cite{BDS}). Furthermore, strongly recurrent self-similar $\mathfrak{T}$-groups are compressible since recurrent virtual endomorphisms of $\mathfrak{T}$-groups are isomorphisms. Proposition \ref{Unit} and Theorem \ref{T_3_2g} provide additional examples of strongly recurrent self-similar $\mathfrak{T}$-groups. To prove them, we require the following lemmas:

\begin{lem} {\rm (\cite{Segal}, Theorem 5)} \label{solbfinite}
Let $n$ be a positive integer. For $0 \neq m \in \mathbb{Z}$, write 
$K_m = \{ g \in GL_n(\mathbb{Z}) \; | \; g \equiv I_n \,\, mod\,\, m \}$.   Let $G$ be a soluble-by-finite subgroup of $GL_n(\mathbb{Z})$. Then every subgroup of finite index in $G$ contains $G \cap K_m$, for some $m \neq 0$.
\end{lem}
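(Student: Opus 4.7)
The plan is to reduce to the case of normal subgroups of finite index, exploit the polycyclicity of $G$ forced by Mal'cev's theorem, and then induct on the Hirsch length of $G$. First, if $H$ has finite index in $G$ its normal core $N=\bigcap_{g\in G}g^{-1}Hg$ is also of finite index and lies inside $H$, so it suffices to produce, for every normal finite-index subgroup $N$ of $G$, some $m\neq 0$ with $G\cap K_m\leqslant N$. Second, by Mal'cev's theorem every soluble subgroup of $GL_n(\mathbb{Z})$ is polycyclic; combined with the soluble-by-finite hypothesis, this makes $G$ polycyclic-by-finite, and in particular $G$ has a well-defined finite Hirsch length $h(G)$. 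Note also that each $K_m$ is normal of finite index in $GL_n(\mathbb{Z})$ (it is the kernel of the reduction $GL_n(\mathbb{Z})\to GL_n(\mathbb{Z}/m\mathbb{Z})$), so each $G\cap K_m$ is already normal of finite index in $G$.

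The induction is on $h(G)$. If $h(G)=0$ then $G$ is finite, and one may choose $m$ strictly greater than the absolute value of every entry of $g-I_n$ for each non-identity $g\in G$; then $G\cap K_m=\{I_n\}\leqslant N$. Assume now $h(G)>0$. Since $G$ is polycyclic-by-finite, it contains a nontrivial torsion-free abelian normal subgroup $A$ (for example extracting such a term from a rational series on the Fitting subgroup of a torsion-free finite-index normal subgroup of $G$).

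For the inductive step, split on whether $A\leqslant N$. If $A\leqslant N$, then the conjugation action of $G$ on $A\cong\mathbb{Z}^r$ gives a homomorphism $G\to GL_r(\mathbb{Z})$ whose image is soluble-by-finite of strictly smaller Hirsch length (at least once we replace $G$ by $G/C_G(A)$ and deal with the residual abelian piece separately). Applying the inductive hypothesis to the image of $G$ and the image of $N$ there yields a modulus $m_1$ whose pullback ensures the $G/A$-component of any element of $G\cap K_{m_1}$ already lies in $N/A$. If instead $A\not\leqslant N$, replace $N$ by $N\cap A$ inside $A$ and use that in a finitely generated abelian subgroup of $GL_n(\mathbb{Z})$ the principal congruence subgroups cut out a neighborhood base of the identity; combine this with the previous case applied to $NA$ in place of $N$.

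The main obstacle will be stitching the inductive datum together into a single principal congruence modulus of the ambient $GL_n(\mathbb{Z})$. Concretely, one has to verify that the kernel of the conjugation representation $G\to GL(A)$ is contained in $A$ up to finite index---this typically requires choosing $A$ self-centralizing, for instance as the center of the Fitting subgroup of a nilpotent-by-finite normal subgroup of $G$---and then check that the two moduli controlling (i) the induced quotient action and (ii) the $A$-coefficient can be replaced by a single common multiple $m$ for which the corresponding $K_m\leqslant GL_n(\mathbb{Z})$ simultaneously controls both pieces. The rest of the argument is essentially bookkeeping around this packaging step.
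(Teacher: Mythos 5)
The paper does not prove this lemma at all: it is quoted verbatim as Theorem 5 of Segal's \emph{Polycyclic Groups} and used as a black box, so there is no internal proof to compare against. Judged on its own, your proposal is a reduction skeleton rather than a proof, and the gap sits exactly where you park it as ``bookkeeping.'' The induction on Hirsch length produces congruence conditions in the wrong group: passing to the conjugation action of $G$ on $A\cong\mathbb{Z}^r$ yields a congruence subgroup of $GL_r(\mathbb{Z})$, and there is no a priori implication from ``$g\equiv I_n \bmod m$ in the ambient $GL_n(\mathbb{Z})$'' to ``$g$ acts on $A$ trivially modulo $m_1$'' in the sense needed. Indeed, $g\equiv I_n \bmod m$ gives $g^{-1}ag\equiv a \bmod m$ as integer matrices, but to conclude $[a,g]\in A^{m_1}$ you need to know that the matrix-congruence topology on the abelian group $A$ refines its finite-index (power) topology --- which is precisely an instance of the theorem you are trying to prove. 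So the inductive step either begs the question or silently assumes the abelian case.

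That abelian case, which you invoke with the phrase ``principal congruence subgroups cut out a neighborhood base of the identity'' in a finitely generated abelian subgroup of $GL_n(\mathbb{Z})$, is the arithmetic heart of Segal's theorem and is not a formality: for the unipotent part it follows from the $\log$/$\exp$ correspondence, but for the semisimple (diagonalizable over a number field) part it amounts to showing that the congruence topology on a finitely generated group of units of a ring of algebraic integers refines the topology of finite-index subgroups, which requires Dirichlet's unit theorem together with a genuine argument about orders of units in the finite rings $\mathfrak{o}/m\mathfrak{o}$. Segal's actual proof runs through exactly this route (triangularization over a number field after passing to a finite-index subgroup, then separate treatment of the unipotent and diagonal parts), not through an induction on Hirsch length inside the abstract group. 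Until you supply the abelian case and a mechanism for converting the inductive data back into a single principal congruence subgroup of the ambient $GL_n(\mathbb{Z})$, the argument is not complete.
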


\begin{lem} {\rm (\cite{smith}, Lemma 6)} \label{HG'}
Let $G$ be a  $\mathfrak{T}$-group and $H \leqslant G$. If $[G:HG']$ is finite, then  $[G:H]$ is finite.
\end{lem}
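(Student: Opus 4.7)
The plan is to prove this by induction on the nilpotency class $c$ of $G$; the argument makes no essential use of torsion-freeness, so I will in fact induct over all finitely generated nilpotent groups, which covers the case of $\mathfrak{T}$-groups. The base case $c = 1$ is immediate: $G$ is abelian, $G' = 1$, so $HG' = H$ and the hypothesis gives $[G:H] < \infty$ directly.

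For the inductive step, set $Z := \gamma_c(G)$, a finitely generated abelian subgroup of $Z(G)$, and consider $\bar G := G/Z$, which has class at most $c-1$. Since $c \geq 2$ forces $Z \leq G'$, one checks that the image $\bar H := HZ/Z$ satisfies $\bar H \cdot \bar G' = HG'/Z$, and hence $[\bar G : \bar H \bar G'] = [G : HG'] < \infty$. The inductive hypothesis applied to $\bar G$ therefore gives $[G : HZ] < \infty$.

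To finish, it suffices to show $[Z : H \cap Z] < \infty$, since $[G:H] = [G : HZ]\cdot[Z : H \cap Z]$. Because $G/HG'$ is finite, there is an integer $n \geq 1$ such that $g^n \in HG'$ for every $g \in G$. For arbitrary $g_1,\dots,g_c \in G$, write $g_i^n = h_i c_i$ with $h_i \in H$ and $c_i \in G'$. Two standard facts about a class-$c$ group come into play: the left-normed weight-$c$ commutator $(x_1,\dots,x_c)\mapsto [x_1,\dots,x_c]$ is multilinear in each argument, and any weight-$c$ commutator with an entry in $\gamma_2(G) = G'$ lies in $\gamma_{c+1}(G) = 1$. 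Combining them yields
\[
[g_1,\dots,g_c]^{n^c} \;=\; [g_1^n,\dots,g_c^n] \;=\; [h_1,\dots,h_c] \;\in\; H \cap Z.
\]
Since $Z$ is a finitely generated abelian group generated by such left-normed $c$-fold commutators, a finite generating set of $Z$ has all its $n^c$-th powers in $H \cap Z$, forcing $[Z : H \cap Z]$ to be finite.

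The step I expect to require the most care is verifying the multilinearity of weight-$c$ commutators in $G$ itself, not merely in some auxiliary quotient. Each invocation of $[ab,x] = [a,x]^b[b,x] = [a,x][a,x,b][b,x]$ produces a correction factor $[a,x,b]$ of strictly greater weight; iterating through the $c-1$ further commutator operations that form $[x_1,\dots,x_c]$, these corrections land in $\gamma_{c+1}(G)$ and hence vanish. Once this bookkeeping is established, combining the inductive step with the index identity $[G:H] = [G:HZ][Z:H \cap Z]$ completes the argument.
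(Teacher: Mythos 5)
Your argument is correct. Note that the paper does not prove this lemma at all --- it is quoted verbatim from Smith's \emph{Compressibility in Nilpotent Groups} (Lemma 6) --- so there is no in-text proof to compare against; what you have produced is a valid self-contained proof of the cited result. The two points that genuinely needed care are both handled properly: you widen the induction to all finitely generated nilpotent groups, which is necessary because $G/\gamma_c(G)$ can acquire torsion even when $G$ is torsion-free; and you justify $[g_1,\dots,g_c]^{n^c}=[h_1,\dots,h_c]\in H\cap\gamma_c(G)$ via multilinearity of the left-normed weight-$c$ commutator modulo $\gamma_{c+1}(G)=1$ together with the fact that any weight-$c$ commutator with an entry in $G'$ dies in $\gamma_{c+1}(G)$. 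Since $HG'\geq G'$ is normal, the exponent $n$ exists, and since $\gamma_c(G)$ is a finitely generated abelian group generated by such commutators, you get $\gamma_c(G)^{n^c}\leq H\cap\gamma_c(G)$ and hence the finiteness of $[\gamma_c(G):H\cap\gamma_c(G)]$, which combined with $[G:H\gamma_c(G)]<\infty$ finishes the induction.
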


\begin{lem}  {\rm (\cite{Baumslag}, Theorem 2.5)} \label{int}
Let $G$ be a $\mathfrak{T}$-group and $H$ an isolated subgroup of $G$. Then, for every prime $p$, we have
$$
\displaystyle\bigcap_{i \geq 1} G^{p^i} H = H.
$$
 \end{lem}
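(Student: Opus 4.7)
The inclusion $H \subseteq \bigcap_{i \geq 1} G^{p^i}H$ is immediate, so the content lies entirely in the reverse inclusion. The plan is to identify $\bigcap_{i \geq 1} G^{p^i}H$ with the closure of $H$ in the pro-$p$ topology on $G$, and then to show that every isolated subgroup of a $\mathfrak{T}$-group is closed in this topology.

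The first step is a verification that each quotient $G/G^{p^i}$ is a finite $p$-group: this is a standard consequence of the classical fact that a finitely generated nilpotent group of bounded exponent is finite. Thus each subgroup $G^{p^i}H$ has $p$-power index in $G$. Conversely, any subgroup of $G$ of $p$-power index $p^k$ must contain $G^{p^k}$, since the corresponding quotient has exponent dividing $p^k$. Hence the family $\{G^{p^i}H\}_{i \geq 1}$ is cofinal among the open subgroups of $G$ containing $H$ in the pro-$p$ topology, and $\bigcap_{i \geq 1} G^{p^i}H$ coincides with the pro-$p$ closure of $H$ in $G$.

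The central step is then to prove that $H$ is closed in the pro-$p$ topology of $G$. I would proceed by induction on the nilpotency class $c$ of $G$. For $c=1$, the group $G$ is free abelian of finite rank and $H$ is a pure subgroup, hence a direct summand; the claim reduces to $\bigcap_i p^i(G/H) = 0$. For $c>1$, let $Z$ be the last nontrivial term of the lower central series of $G$, which is central and isolated. If $Z \subseteq H$, then $H/Z$ is isolated in the $\mathfrak{T}$-group $G/Z$ of class $c-1$, and induction together with the identity $(G/Z)^{p^i}(H/Z) = G^{p^i}H/Z$ closes the argument. If $Z \not\subseteq H$, then $H \cap Z = 1$ by the isolation of $H$, and the strategy would be to pass to the larger subgroup $HZ$ and combine the previous case with the abelian separation in $HZ/H \cong Z/(H \cap Z)$, a torsion-free abelian group.

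The main technical obstacle lies precisely in this second case: it is not automatic that $HZ$ is isolated in $G$, so the clean reduction to the previous case may fail. To overcome this, I would work with the pro-$p$ topology directly and invoke the Mal'cev--Gruenberg theorem that every $\mathfrak{T}$-group is residually a finite $p$-group, combined with a coset analysis exploiting the centrality of $Z$. Concretely, given $g \in \bigcap_{i \geq 1} G^{p^i}H$, the image of $g$ in every finite $p$-quotient $G/N$ must lie in $NH/N$, and the isolation hypothesis, together with residual finite $p$-ness applied to the finitely generated abelian group $Z/(H \cap Z)$, should force $g \in H$. The delicate point is to organise this profinite approximation so that the torsion-freeness used in the abelian separation survives the passage to successive finite $p$-quotients.
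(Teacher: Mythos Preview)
The paper does not prove this lemma; it is simply cited from Baumslag's lecture notes, so there is no argument in the paper to compare your attempt against.

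Your reformulation as ``$H$ equals its pro-$p$ closure'' is correct, and the base case and the sub-case $Z\subseteq H$ of your induction are fine. In the remaining sub-case there are two minor slips --- $\gamma_c(G)$ need not be isolated in a $\mathfrak T$-group (the lower-central factors can carry torsion), and even when it is, $Z\not\subseteq H$ does not force $H\cap Z=1$ once $Z$ has rank greater than one --- both of which are repaired by taking $Z$ to be an isolated infinite cyclic central subgroup. The genuine gap is the one you yourself flag and do not close: $HZ$ need not be isolated, so Case~1 is unavailable for it, and even if one had $\bigcap_i G^{p^i}(HZ)=HZ$, membership in $G^{p^i}H\cap HZ$ does not by itself yield membership in $(HZ)^{p^j}H$, so the two halves do not glue. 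Your closing paragraph does not supply a mechanism either: residual $p$-finiteness of $G$ separates elements from the identity, not from an arbitrary isolated subgroup, and you have not said how the isolation hypothesis enters the profinite approximation. The ingredient that makes the gluing work is the inclusion $G^{(p^i)^c}\cap K\subseteq K^{p^i}$ for any isolated $K\leq G$, which follows at once from Lemma~\ref{nc} (every element of $G^{(p^i)^c}$ is a $p^i$-th power in $G$, and the root must lie in $K$); this is exactly what lets one pass from $g\in G^{p^{ic}}H\cap H_1$ to $g\in H_1^{p^i}H$ for an intermediate isolated $H_1\supsetneq H$, after which an induction on Hirsch length goes through.
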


\begin{prop} \label{Unit}
The group $Tr_1(n, \mathbb{Z})$, the upper unitriangular group of dimension $n$ over $\mathbb{Z}$, is strongly recurrent self-similar.
\end{prop}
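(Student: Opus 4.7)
The plan is to produce, for each finite-index subgroup $H$ of $G := Tr_1(n,\mathbb{Z})$, an injective self-map $\phi : G \to G$ whose image lies inside $H$, and then take $f := \phi^{-1}$ as the desired simple recurrent virtual endomorphism. Since $G$ is nilpotent and sits inside $GL_n(\mathbb{Z})$, I would start by invoking Lemma \ref{solbfinite} to find an integer $m \geq 2$ with $G \cap K_m \leq H$.

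Next, I would set $D := \mathrm{diag}(1, m, m^2, \ldots, m^{n-1})$ and $\phi(U) := D^{-1} U D$. A direct entry computation gives $(\phi(U))_{ij} = m^{j-i} U_{ij}$, so $\phi$ is a well-defined injective endomorphism of $G$ whose image $K := \phi(G)$ satisfies $K \leq G \cap K_m \leq H$. The index $[G:K]$ is then finite, and the inverse $f := \phi^{-1} : K \to G$ is an isomorphism, in particular an epimorphism, hence a recurrent virtual endomorphism of $G$.

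To check that the $f$-core is trivial, I would suppose $N \trianglelefteq G$ is contained in $K$ and is $f$-invariant, i.e.\ $f(N) \leq N$. This rewrites as $\phi^{-1}(N) \leq N$, and applying $\phi$ yields $N \leq \phi(N)$. Iterating, one obtains $N \leq \phi^i(N) \leq \phi^i(G)$ for every $i \geq 0$. But every element of $\phi^i(G)$ has $(j,k)$-entry (for $j < k$) of the form $m^{i(k-j)} U_{jk}$ for some $U \in G$, so these super-diagonal entries are divisible by arbitrarily high powers of $m$. Hence $\bigcap_{i \geq 0} \phi^i(G) = \{I\}$, forcing $N = 1$.

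I do not expect a serious obstacle here: the proof is essentially an entry-level calculation in unitriangular matrices, with the diagonal conjugator $D$ chosen so that $\phi(G)$ simultaneously contracts into the prescribed subgroup $H$ and dilates entries enough that the iterated images $\phi^i(G)$ collapse to the identity. The only step meriting care is the bookkeeping that converts $f$-invariance of $N$ into the ascending chain $N \leq \phi(N) \leq \phi^2(N) \leq \cdots$ needed to trigger the contraction argument.
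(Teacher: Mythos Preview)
Your proof is correct and follows essentially the same approach as the paper: both use conjugation by the diagonal matrix $D = \mathrm{diag}(1,m,\ldots,m^{n-1})$ together with Lemma~\ref{solbfinite} to place the image inside the given finite-index subgroup, and then take the inverse conjugation as the recurrent virtual endomorphism. The only difference is that you spell out the simplicity argument via $\bigcap_i \phi^i(G)=\{I\}$, whereas the paper simply asserts that $f$ is a bijective simple virtual endomorphism.
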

\begin{proof}
Let $m \in \mathbb{N}^*$ and $D_m = diag(1,m,m^2, \ldots , m^{n-1})$. Then $$Tr_1(n, \mathbb{Z})^{D_m} \leqslant Tr_1(n, m\mathbb{Z}).$$ Since the elements of $Tr_1(n, \mathbb{Z})^{D_m}$ has the form 
$$
\left(
\begin{array}[pos]{ccccc}
	1 & a_{12}m & a_{13}m^2 & \cdots & a_{1n}m^{n-1} \\
	0 & 1       & a_{23}m   & \cdots & a_{2n}m^{n-2}  \\
	\vdots & \vdots & \vdots & \cdots& \vdots \\
	0 & 0 & 0 & \cdots & a_{n-1,n}m \\
	0 & 0 & 0 & \cdots & 1
\end{array}
\right),
$$
it is easy to see that the homomorphism $f: Tr_1(n, \mathbb{Z})^{D_m} \rightarrow Tr_1(n, \mathbb{Z})$
defined by $$x \mapsto x^{{D_m}^{-1}},\,\,\,\, x \in Tr_1(n, \mathbb{Z})^{D_m} $$ 
is a bijective simple virtual endomorphism of $Tr_1(n, \mathbb{Z})$. Now, if $H \leqslant G$ has finite index, by Lemma \ref{solbfinite}, there exists $k \in \mathbb{N}^*$ such that $Tr_1(n, k\mathbb{Z}) \leqslant H$. Conjugation by $D_k$ gives the desired result.
\end{proof}

\begin{thm}\label{T_3_2g}
Every 2-generated $\mathfrak{T}_3$-group is strongly recurrent self-similar.
\end{thm}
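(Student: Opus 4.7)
The plan is, for each finite-index subgroup $H\leqslant G$, to construct a simple recurrent virtual endomorphism from a finite-index subgroup of $H$ onto $G$, using a canonical scaling automorphism of the Malcev completion. Write $G=\langle a,b\rangle$, let $G^{\ast}$ denote its Malcev completion, and let $L$ be the associated $\mathbb{Q}$-Lie algebra of $G^{\ast}$.

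The main preliminary step I would carry out is to exhibit a Lie algebra grading $L=L_1\oplus L_2\oplus L_3$ compatible with the lower central series. Fixing lifts $x,y\in L$ of generators of $L/[L,L]$, I set $L_1:=\mathbb{Q}x\oplus\mathbb{Q}y$, $L_2:=\mathbb{Q}[x,y]$, and $L_3:=\gamma_3(L)$. Because $G$ is $2$-generated and of class exactly $3$, the quotient $[L,L]/\gamma_3(L)$ is $1$-dimensional, spanned by $\overline{[x,y]}$, so a dimension count yields the direct sum decomposition, and routine commutator identities give $[L_i,L_j]\subseteq L_{i+j}$, confirming the grading.

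With the grading in hand, for each positive integer $q$ the linear map $\phi_q$ acting by $q^i$ on $L_i$ is a Lie algebra automorphism of $L$, which exponentiates to a Lie group automorphism $\Phi_q$ of $G^{\ast}$ satisfying $\Phi_q(a)=a^q$ and $\Phi_q(b)=b^q$. In particular $\Phi_q(G)=\langle a^q,b^q\rangle\leqslant G$, so $\Phi_q|_G$ is an injective endomorphism of $G$; since $[G:\Phi_q(G)\,G']=q^2$ is finite, Lemma \ref{HG'} implies $[G:\Phi_q(G)]<\infty$. Given a finite-index subgroup $H\leqslant G$, I would then set $q:=[G:H]!$, which forces $a^q,b^q\in H$ (both lie in the $G$-core of $H$); then $K:=\Phi_q(G)\leqslant H$ has finite index, and $f:=\left(\Phi_q|_G\right)^{-1}\colon K\to G$ is a bijective, hence recurrent, virtual endomorphism.

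To conclude, I would verify that the $f$-core is trivial. Any $G$-normal subgroup $N\leqslant K$ with $f(N)\leqslant N$ satisfies $N\leqslant\Phi_q(N)$, and by induction $N\leqslant\Phi_q^{n}(G)$ for every $n\geqslant 0$, so it suffices to show $\bigcap_{n\geqslant 0}\Phi_q^{n}(G)=1$. This follows layer by layer, since $\Phi_q$ descends to multiplication by $q$, $q^2$, and $q^3$ on $G/G'\cong\mathbb{Z}^2$, $G'/\gamma_3(G)\cong\mathbb{Z}$, and $\gamma_3(G)$ respectively, each a finitely generated torsion-free abelian group. The main obstacle in this plan is establishing the Lie algebra grading: the $2$-generator class $3$ case is special because $[L,L]/\gamma_3(L)$ is one-dimensional with the canonical lift $[x,y]$, while with more generators or higher class no such canonical lift need exist, and the resulting failure of the Malcev Lie algebra to be graded is precisely what permits the non-self-similar $\mathfrak{T}$-groups of Bludov--Gusev underlying Theorem D.
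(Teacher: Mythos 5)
Your argument is correct, but it proceeds by a genuinely different route than the paper. The paper works entirely inside the group: it first shows (via Lemma \ref{HG'}) that no relation of $G=\langle a,b\rangle$ can involve a nontrivial power of $a$ or $b$ outside $G'$, then invokes Makanin's compressibility theorem to produce an explicit isomorphism $\psi\colon G\to\langle A,B\rangle$ with $A=a^{m_1}[a,b]^{m_2}$, $B=b^{m_1}[a,b]^{m_3}$ for coefficients $m_1,m_2,m_3$ tailored to the relations of $G$ and to the given subgroup $H$, and finally kills the core by computing that $G^{\psi^2}\leqslant G^{m_1}$ and applying Baumslag's intersection lemma (Lemma \ref{int}). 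You instead pass to the Malcev Lie algebra and observe that in the $2$-generator, class-$3$ case the algebra is automatically graded, because $\gamma_2(L)/\gamma_3(L)$ is one-dimensional with the canonical representative $[x,y]$; the scaling automorphism of that grading restricts to the "diagonal" virtual endomorphism $a\mapsto a^q$, $b\mapsto b^q$ with no correction terms, and simplicity follows from contraction of the scaling maps. Your approach is cleaner and more conceptual — it explains structurally why the $2$-generated class-$3$ case is safe and why the obstruction appears for $N_{3,4}$, where relations such as $[a,b]=[a,c,c]$ mix degrees — at the cost of importing the Malcev/Lie correspondence, whereas the paper's proof stays elementary and explicit. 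Two details in your write-up deserve a line of justification rather than assertion: that $G/G'\cong\mathbb{Z}^2$ and $G'/\gamma_3(G)\cong\mathbb{Z}$ (both follow from torsion-freeness and class exactly $3$; e.g.\ if $[a,b]^k\in\gamma_3(G)$ then $[a,b,a]^k=[a,[a,b]^k]=1$ forces $\gamma_3(G)=1$), since torsion in these layers would break both the dimension count for your grading and the final layer-by-layer intersection; and in that intersection argument one should note that $\Phi_q^n(G)\cap G'=\Phi_q^n(G')$ and $\Phi_q^n(G')\cap\gamma_3(G)=\Phi_q^n(\gamma_3(G))$, which again uses the torsion-freeness of the layers. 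Neither point is a gap — both are true and short to verify.
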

\begin{proof}
Let $G$ be a $\mathfrak{T}_3$-group generated by $a$ and $b$ and $H$ be a finite-index subgroup of $G$.  It is known that every element of $G$ is written as a product of basic commutators. So, the relations in $G$ have the form 
$$
a^m b^n [a,b]^{k_1} [a,b,a]^{k_2} [a,b,b]^{k_3}, 
$$
where $m,n,k_1, k_2, k_3 \in \mathbb{Z}$.

Suppose that $m \neq 0$. If $K = \left\langle b \right\rangle$, the right cosets of $KG'$ in $G$ have the form $KG'a^i$, where $0 \leq i \leq m-1$. Thus $KG'$ has finite index in $G$ and, by Lemma \ref{HG'}, $[G:K]$ is finite. But it is not possible, once $K$ and $G$ have nilpotency classes 1 and 3, respectively. So $m = 0$. The same argument produces $n = 0$. The relations of $G$ assume the form:
$$
r_1(a,b) = [a,b]^{k_{11}} [a,b,a]^{k_{12}} [a,b,b]^{k_{13}} \mbox{ or } r_2(a,b) =[a,b,a]^{k_{14}} [a,b,b]^{k_{15}},
$$
for some $k_{11}, k_{12}, k_{13}, k_{14}, k_{15} \in \mathbb{Z}$.

There exists a positive integer $d$ such that $$a^d, b^d, [a,b]^d,  [a,b,a]^d,  [a,b,b]^d \in H.$$
Consider $m_1, m_2, m_3 \in \mathbb{N}^*$ and the elements $A = a^{m_1}[a,b]^{m_2}$ and $B = b^{m_1}[a,b]^{m_3}$ of $G$. It was proved in \cite{Makanin}{ (Theorem 2)} that $\langle A, B \rangle$ has finite index in $G$ and the function $\psi$  defined in $\{ a,b \}$ by $a^{\psi} = A$ and $b^{\psi} = B$ extends to an isomorphism $\psi:G \longrightarrow \left\langle A, B \right\rangle$ when 
$$m_1 = k_{11} d,
m_2 = k_{13}d - dm_1k_{13} - \frac{m_1(m_1-1)}{2} $$ 
 and 
$$
m_3 = -k_{12}d + dm_1k_{12} + \frac{m_1(m_1-1)}{2}.
$$
Now observe that
$$
[A,B] =  
[a,b]^{-m_1^2} [a,b,a]^{m_1(- m_3 + \frac{m_1(m_1-1)}{2})} [a,b,b]^{m_1(m_2 - \frac{m_1(m_1-1)}{2})}
\in G^{m_1},
$$
which give us that $a^{\psi^2} =  A^{m_1}[A,B]^{m_2}$ and  $b^{\psi^2} =  B^{m_1}[A,B]^{m_3}$ belongs to $G^{m_1}$. 

Let $\lambda: G \longrightarrow Aut(T_G)$ be the recurrent self-similar representation defined by chain (\ref{cadeiapsi}). By Lemma \ref{int}, we have that
$$
ker \lambda \leqslant \bigcap_{i \geq 0} G^{\psi^i} \leqslant \bigcap_{k \geq 0} G^{\psi^{2k}} \leqslant \bigcap_{k \geq 0} G^{{m_1}^k} = 1.$$
Therefore $\lambda$ is faithful and $G$ is a strongly recurrent self-similar group. \end{proof}


\section{Malcev Completions in Strongly Recurrent Self-similar Groups}

To prove Theorem B, some results will be necessary:

\begin{lem}
    {\rm (\cite{Segal}, Proposition 2)} \label{nc}
If $G$ is a nilpotent group of class at most $c$ and $n$ is a positive integer, then every element of $G^{n^c}$ is a $n$-power of an element of $G$.
\end{lem}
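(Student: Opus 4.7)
The plan is to prove this by induction on the nilpotency class $c$. The base case $c=1$ is immediate: $G$ is abelian, $(ab)^n = a^n b^n$, so the subgroup $G^n$ coincides with the set $\{g^n : g \in G\}$ of honest $n$-th powers.

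For the inductive step with $c \geq 2$, the argument rests on two ingredients: the Hall--Petresco collection formula and an arithmetic divisibility. Any $w \in G^{n^c}$ may be written $w = g_1^{n^c e_1}\cdots g_r^{n^c e_r}$, and the iterated Hall--Petresco identity gives
\[
w \;=\; \bigl(g_1^{e_1} \cdots g_r^{e_r}\bigr)^{n^c} \cdot \prod_{j=2}^{c} c_j^{\binom{n^c}{j}},
\]
where $c_j \in \gamma_j(G)$. The leading factor $\bigl((g_1^{e_1}\cdots g_r^{e_r})^{n^{c-1}}\bigr)^n$ is a genuine $n$-th power.

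The arithmetic input is the claim that $n \mid \binom{n^c}{j}$ for every $2 \le j \le c$. For each prime $p \mid n$ with $v = v_p(n) \geq 1$, a short $p$-adic valuation computation gives $v_p\!\bigl(\binom{n^c}{j}\bigr) = cv - v_p(j)$; since $v_p(j) \le \log_p c \le c-1 \le (c-1)v$ (the middle step using $2^{c-1} \geq c$), this is at least $v$, while primes not dividing $n$ contribute trivially. Each correction $c_j^{\binom{n^c}{j}}$ is therefore itself an $n$-th power, so $w$ becomes a product of $n$-th powers drawn from $G, \gamma_2(G), \ldots, \gamma_c(G)$.

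The main obstacle is then to amalgamate these $n$-th powers into a single $n$-th power $h^n$. I would proceed by successive approximation along the lower central series: starting from $h_0 = (g_1^{e_1}\cdots g_r^{e_r})^{n^{c-1}}$, construct $h = h_0 v_2 \cdots v_c$ with $v_j \in \gamma_j(G)$, determining each $v_j$ so that $(h_0 v_2 \cdots v_j)^n$ agrees with $w$ modulo $\gamma_{j+1}(G)$. The residual at each step lies in the abelian quotient $\gamma_j(G)/\gamma_{j+1}(G)$, and solvability of $v_j^n \equiv (\text{residual}) \pmod{\gamma_{j+1}(G)}$ there reduces to checking that the residual is an $n$-th multiple; this follows by careful bookkeeping, exploiting the divisibility $n \mid \binom{n^c}{j}$ together with an amplification coming from the $n^{c-1}$-power built into $h_0$ and the $\binom{n^c}{j}/n$ factors entering the $v_j$. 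The final adjustment $v_c$ reduces to a pure abelian problem in the central $\gamma_c(G)$, closing the induction.
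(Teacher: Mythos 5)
The paper offers no proof of this lemma --- it is quoted verbatim from Segal's \emph{Polycyclic Groups} --- so your proposal has to stand on its own. Its opening is sound and is indeed the standard first half of the argument: the abelian base case, the Hall--Petresco decomposition with leading factor $\bigl((g_1^{e_1}\cdots g_r^{e_r})^{n^{c-1}}\bigr)^n$, and the divisibility $n \mid \binom{n^c}{j}$ for $2 \le j \le c$. (Two small points: your valuation formula should be an inequality $v_p\bigl(\binom{n^c}{j}\bigr) \ge cv - v_p(j)$, which follows from $j\binom{m}{j} = m\binom{m-1}{j-1}$, but the lower bound is all you need; and iterating Hall--Petresco over $r$ factors produces many correction terms rather than a single $c_j$ per weight, though the cleaner congruence modulo $\prod_{j\ge 2}\gamma_j(G)^{\binom{n^c}{j}}$ repairs this.)

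The genuine gap is the amalgamation step, which is where the entire content of the lemma lives and which you only assert. Knowing that $w = h_0^n u_2^n\cdots u_c^n$ is a product of $n$-th powers proves nothing by itself: in a nilpotent group of class $\ge 2$ a product of $n$-th powers is generally \emph{not} an $n$-th power --- already in the Heisenberg group $a^n b^n = (ab)^n[b,a]^{\binom{n}{2}}$ is not an $n$-th power when $n$ is even, since $n \nmid \binom{n}{2}$ --- and this failure is exactly why the exponent $n^c$, rather than $n$, appears in the statement. Your successive-approximation scheme does isolate, at each level $j$, a residual in the abelian quotient $\gamma_j(G)/\gamma_{j+1}(G)$, but whether that residual is an $n$-th power there depends on a fresh divisibility estimate: each new collection reintroduces binomial coefficients $\binom{n}{i}$ that are \emph{not} divisible by $n$, and these must be beaten by the extra powers of $n$ carried by $h_0$ (an $n^{c-1}$-th power) and by the $u_i$ (which are $\binom{n^c}{i}/n$-th powers). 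You gesture at this (``careful bookkeeping\dots amplification'') but never verify it, and for general $c$ that verification \emph{is} the proof. A telltale structural sign is that your inductive hypothesis for class $c-1$ is never actually invoked in the inductive step. The standard arguments close precisely this gap by using the induction: either apply it to $G/\gamma_c(G)$ to write $w = u^n z$ with $z \in \gamma_c(G)$ central and then show $z \in \gamma_c(G)^n$, or observe the stronger divisibility $n^{c-j+1} \mid \binom{n^c}{j}$ and apply the inductive hypothesis to the subgroups $\gamma_j(G)$, which have class at most $\lfloor c/j\rfloor \le c-j+1$. As written, the proposal is a correct plan whose central step is missing.
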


Given a group $G$, let us denote by $G^{\ast}$ its Malcev completion, and $G^{1/n} = \left\langle u \in G^{\ast} \,| \, u^n \in G \right\rangle$. Lemma \ref{nc} implies that $(G^{n^c})^{1/n} \leqslant G$: the nth roots of elements in $G^{n^c}$ belong to $G$.

\begin{lem}
    {\rm (\cite{GSS}, Lemma 4.10)} \label{index}
Let $G$ be a $\mathfrak{T}$-group and $\alpha \in Aut(G^{\ast}) \cap End(G)$. Then 
$$[G: G^{\alpha}] = |det(\alpha)|.$$
\end{lem}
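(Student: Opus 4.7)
The plan is to argue by induction on the nilpotency class $c$ of $G$, reducing to the abelian case at each step. For $c=1$, $G\cong \mathbb{Z}^n$ and $G^{\ast}\cong \mathbb{Q}^n$, so $\alpha$ is simultaneously a $\mathbb{Q}$-linear automorphism of $\mathbb{Q}^n$ and a $\mathbb{Z}$-linear endomorphism of $\mathbb{Z}^n$; the equality $[\mathbb{Z}^n:\alpha(\mathbb{Z}^n)]=|\det(\alpha)|$ then follows from Smith normal form.

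For the inductive step, suppose the result holds for $\mathfrak{T}$-groups of class less than $c$ and let $G$ be of class $c$. Set $Z^{\ast}:=\gamma_c(G^{\ast})$ and $Z:=G\cap Z^{\ast}$; then $Z$ is central in $G$ (indeed it is the isolator of $\gamma_c(G)$), and since $Z^{\ast}$ is characteristic in $G^{\ast}$ both $Z^{\ast}$ and $Z$ are $\alpha$-invariant. A crucial observation is the identity $G^{\alpha}\cap Z=Z^{\alpha}$: if $z=\alpha(g)\in Z$, then applying $\alpha^{-1}\in \mathrm{Aut}(G^{\ast})$, which preserves $Z^{\ast}$, gives $g\in Z^{\ast}\cap G=Z$. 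Using this together with the centrality of $Z$ in $G$, I would compute
\begin{equation*}
[G:G^{\alpha}] \;=\; [G:G^{\alpha}Z]\cdot [G^{\alpha}Z:G^{\alpha}] \;=\; [G/Z:(G/Z)^{\bar{\alpha}}]\cdot [Z:Z^{\alpha}],
\end{equation*}
where $\bar{\alpha}$ is the endomorphism of $G/Z$ induced by $\alpha$. Since $(G/Z)^{\ast}=G^{\ast}/Z^{\ast}$ and $\bar{\alpha}$ lifts to an automorphism there, the inductive hypothesis yields the first factor as $|\det(\bar{\alpha})|$, while the base case applied to the free abelian group $Z$ and the endomorphism $\alpha|_{Z}$ yields the second as $|\det(\alpha|_{Z^{\ast}})|$.

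To conclude, I would invoke the Malcev correspondence to view $\alpha$ as a $\mathbb{Q}$-linear automorphism of the Lie algebra $\mathfrak{g}^{\ast}$ of $G^{\ast}$, preserving the ideal $\mathfrak{z}^{\ast}$ corresponding to $Z^{\ast}$; choosing a Malcev basis of $G^{\ast}$ adapted to the filtration $\mathfrak{z}^{\ast}\leq \mathfrak{g}^{\ast}$ makes the matrix of $\alpha$ block triangular, and hence $\det(\alpha)=\det(\alpha|_{\mathfrak{z}^{\ast}})\cdot \det(\bar{\alpha})$. Combining the three observations gives $[G:G^{\alpha}]=|\det(\alpha)|$, as required. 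The main technical point to be careful about is precisely this last multiplicativity of the determinant through the central extension $1\to Z^{\ast}\to G^{\ast}\to G^{\ast}/Z^{\ast}\to 1$: one must verify that the $\det(\alpha)$ appearing in the statement of the lemma really agrees, via the Malcev correspondence, with the determinant of the induced linear map on $\mathfrak{g}^{\ast}$, and that this determinant factors correctly through the successive quotients of the lower central filtration.
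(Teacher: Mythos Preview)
The paper does not prove this lemma; it is quoted verbatim from \cite{GSS} (Grunewald--Segal--Smith, Lemma 4.10) and used as a black box in the proof of Theorem~B. There is therefore no in-paper argument to compare your proposal against.

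That said, your outline is correct and is essentially the standard proof of this fact: induct along the (isolated) lower central series, splitting off the bottom central factor at each step, and use that the matrix of $\alpha$ with respect to a Malcev basis adapted to the filtration is block upper-triangular, so $\det(\alpha)$ factors as the product of determinants on the graded pieces. The one place you flag as delicate---that the quantity $\det(\alpha)$ in the statement really is the determinant of the linear map on the Lie algebra, and that this is multiplicative through the central extension---is exactly the point where the Malcev correspondence does the work: $\log$ and $\exp$ are polynomial bijections, $\alpha$ transports to a $\mathbb{Q}$-linear automorphism of $\mathfrak{g}^{\ast}$, and the block-triangular shape with respect to $\mathfrak{z}^{\ast}\trianglelefteq \mathfrak{g}^{\ast}$ gives the factorisation. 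Your identity $G^{\alpha}\cap Z=Z^{\alpha}$ and the index computation $[G:G^{\alpha}]=[G/Z:(G/Z)^{\bar\alpha}]\cdot[Z:Z^{\alpha}]$ are fine as written.
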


\begin{lem}
    {\rm (\cite{BG}, Corollary 1.2)} \label{Mn}
Suppose that $G$ is a torsion-free nilpotent group and $M$ a finitely generated subgroup of $G^{\ast}$. Then there exists a positive integer $n = n(M)$ such that $M^n \leqslant G$.
\end{lem}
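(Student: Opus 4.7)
The plan is to proceed by induction on the nilpotency class $c$ of $G$; note that $M$, being a subgroup of $G^{\ast}$, is itself nilpotent of class at most $c$. In the base case $c=1$ the completion $G^{\ast}$ is a torsion-free divisible abelian group, so each generator $m_i$ of $M$ has some positive integer $n_i$ with $m_i^{n_i}\in G$ by the defining property of the Malcev completion; taking $n=\mathrm{lcm}(n_1,\dots,n_k)$ yields $M^n\leq G$.

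For the inductive step, I would set $Z=Z(G)$ and $Z^{\ast}=Z(G^{\ast})$, the latter coinciding with the Malcev completion of $Z$ and satisfying $G\cap Z^{\ast}=Z$, since $Z$ is isolated in the $\mathfrak{T}_c$-group $G$. The natural projection $\pi\colon G^{\ast}\to G^{\ast}/Z^{\ast}\cong (G/Z)^{\ast}$ sends $M$ to a finitely generated subgroup of $(G/Z)^{\ast}$, and $G/Z$ is a $\mathfrak{T}_{c-1}$-group, so the inductive hypothesis supplies an $n_1$ with $\pi(M)^{n_1}\leq \pi(G)=GZ^{\ast}/Z^{\ast}$; equivalently, $M^{n_1}\leq GZ^{\ast}$.

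To eliminate the remaining central error, the next step is to consider the map $\rho\colon GZ^{\ast}\to Z^{\ast}/Z$ given by $\rho(gz)=zZ$, which is well-defined and a homomorphism because $Z^{\ast}$ is central in $G^{\ast}$ and $G\cap Z^{\ast}=Z$, and whose kernel is exactly $G$. Since $Z$ is free abelian of finite rank $r$, the group $Z^{\ast}/Z\cong (\mathbb{Q}/\mathbb{Z})^{r}$ is torsion; moreover, $M$ is polycyclic, so $M^{n_1}$ is finitely generated, whence $\rho(M^{n_1})$ is finitely generated and torsion, hence finite. Letting $n_2$ be its exponent, we obtain $(M^{n_1})^{n_2}\leq \ker\rho=G$. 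Since every generator of $M^{n_1 n_2}$ has the form $w^{n_1 n_2}=(w^{n_1})^{n_2}$ with $w\in M$, and $w^{n_1}\in M^{n_1}$, we conclude $M^{n_1 n_2}\leq (M^{n_1})^{n_2}\leq G$, which closes the induction with $n=n_1 n_2$.

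The main obstacle is a clean justification of the Malcev-completion facts that drive the induction, in particular the identification $G^{\ast}/Z(G^{\ast})\cong (G/Z(G))^{\ast}$ and the equality $G\cap Z(G^{\ast})=Z(G)$ for $\mathfrak{T}$-groups; these are standard consequences of the construction of $G^{\ast}$ as the $\mathbb{Q}$-powered hull of $G$, but they are what makes the quotient-by-center step legitimate, after which the remainder of the argument is essentially bookkeeping.
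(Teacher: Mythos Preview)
The paper does not prove this lemma; it is quoted verbatim as Corollary~1.2 of \cite{BG} and used as a black box, so there is no in-paper argument to compare against.

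Your induction on the nilpotency class is sound. The identifications $Z(G^{\ast})=Z(G)^{\ast}$ and $G\cap Z(G^{\ast})=Z(G)$ hold because centralisers in torsion-free nilpotent groups are isolated and $G^{\ast}$ is the isolator of $G$; together they give the isomorphism $G^{\ast}/Z(G^{\ast})\cong (G/Z(G))^{\ast}$ you need. The map $\rho$ is well defined exactly because $G\cap Z^{\ast}=Z$, and the passage $M^{n_1n_2}\leqslant (M^{n_1})^{n_2}\leqslant\ker\rho=G$ is correct.

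One small overreach: the lemma as stated does not assume $G$ is finitely generated, so you cannot claim $Z$ is free abelian of finite rank. This is harmless, though, since all you actually use is that $Z^{\ast}/Z$ is torsion (every element of $Z^{\ast}$ has a power in $G\cap Z^{\ast}=Z$) and that $\rho(M^{n_1})$ is a finitely generated subgroup of it (because $M$, being finitely generated nilpotent, is polycyclic and hence $M^{n_1}$ is finitely generated); a finitely generated torsion abelian group is finite, and the rest of your argument goes through unchanged.
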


Note that, according to Lemma \ref{Mn}, any two finite-index subgroups of a $\mathfrak{T}$-group have nontrivial intersection since each one possesses powers from the other.

\begin{thm}
     \label{strong2}
Let $G_1$ and $G_2$ be $\mathfrak{T}_c$-groups such that $G_1^{\ast} \simeq G_2^{\ast}$. Then $G_1$ is strongly recurrent self-similar if, and only if, $G_2$  is strongly recurrent self-similar.
\end{thm}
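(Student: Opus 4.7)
The plan is to prove the forward implication; the converse follows by interchanging the roles of $G_1$ and $G_2$. Identify $G_1^{\ast}\simeq G_2^{\ast}$ with a common Malcev completion $G^{\ast}$, inside which $G_1$ and $G_2$ are commensurable $\mathfrak{T}_c$-subgroups. Given any finite-index $H_2\leq G_2$, the task is to produce a finite-index $K_2\leq H_2$ together with a simple recurrent virtual endomorphism $\psi_2:K_2\to G_2$.

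By Lemma \ref{Mn} choose $m\geq 1$ with $G_2^{m}\leq G_1$, and set $H_1:=H_2^{m^c}\cap G_1$. A routine count, using that $H_2^{m^c}$ has finite index in $G_2$ and that $G_1\cap G_2$ has finite index in both $G_1$ and $G_2$, shows that $H_1$ has finite index in $G_1$. Applying the strong recurrent self-similarity of $G_1$ to $H_1$ produces a finite-index $K_1\leq H_1$ and a recurrent virtual endomorphism $\psi_1:K_1\to G_1$ with trivial core; as recurrent virtual endomorphisms of $\mathfrak{T}$-groups are isomorphisms, $\psi_1$ is an isomorphism. Its inverse extends uniquely to an automorphism $\alpha\in\mathrm{Aut}(G^{\ast})$ satisfying $\alpha(G_1)=K_1\leq H_2^{m^c}$ and $\bigcap_{i\geq 0}\alpha^i(G_1)=1$.

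The decisive step is the containment $\alpha(G_2)\leq H_2$. For any $g\in\alpha(G_2)$,
$$g^m\in\alpha(G_2)^m=\alpha(G_2^m)\leq\alpha(G_1)\leq H_2^{m^c}.$$
Since $H_2$ is nilpotent of class at most $c$, Lemma \ref{nc} provides $h\in H_2$ with $h^m=g^m$, and the uniqueness of $m$-th roots in the divisible torsion-free group $G^{\ast}$ forces $g=h\in H_2$. Setting $K_2:=\alpha(G_2)$, Lemma \ref{index} yields $[G_2:K_2]=|\det\alpha|<\infty$, so $K_2\leq H_2$ has finite index in $G_2$.

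Let $\psi_2:K_2\to G_2$ denote the restriction of $\alpha^{-1}$; it is an isomorphism, hence recurrent. Triviality of its core follows because $\bigcap_{i\geq 0}\alpha^i(G_2)=1$ (any $g$ in this intersection has $g^m\in\bigcap_i\alpha^i(G_1)=1$, so $g=1$), combined with a standard Noetherian-chain argument in the polycyclic group $G_2$ that rules out any nontrivial $\psi_2$-invariant normal subgroup contained in $K_2$: such an $N$ gives an ascending chain $N\leq\alpha(N)\leq\alpha^2(N)\leq\cdots$ stabilizing at an $\alpha$-stable $M\leq\bigcap_i\alpha^i(G_2)=1$. The main obstacle of the whole argument is to force $\alpha(G_2)$ back inside $H_2$, and this is precisely what the inflated choice $H_1=H_2^{m^c}\cap G_1$ together with Lemma \ref{nc} is engineered to accomplish.
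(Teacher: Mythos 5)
Your construction of $\psi_2$ is essentially the one in the paper: you extend the inverse of the given simple recurrent virtual endomorphism of $G_1$ to an automorphism $\alpha$ of the common Malcev completion, and the $m^c$-power trick together with Lemma \ref{nc} and uniqueness of roots correctly forces $K_2=\alpha(G_2)$ into $H_2$; Lemma \ref{index} then gives the finite index. The gap is in the verification that $\psi_2$ is simple. You assert that triviality of the core of $\psi_1$ yields $\bigcap_{i\ge 0}\alpha^i(G_1)=1$, and everything downstream (the triviality of $\bigcap_{i}\alpha^i(G_2)$ and the terminus of your Noetherian chain) rests on this. But a trivial core does not imply a trivial intersection of images. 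Take $G_1$ the Heisenberg group $\langle a,b\rangle$ with $c=[a,b]$ central, and $\psi\colon a\mapsto a$, $b\mapsto b^2$ (so $c\mapsto c^2$); then $f=\psi^{-1}\colon\langle a,b^2\rangle\to G_1$ is a recurrent virtual endomorphism with trivial core (a nontrivial normal $f$-invariant subgroup would contain some $c^s$ and then all $c^{s/2^i}$, which is impossible), yet $a\in\bigcap_{i} G_1^{\psi^i}$. The hypothesis of strong recurrent self-similarity only hands you \emph{some} simple recurrent $\psi_1$, which may well be of this kind, so the step is not merely unjustified but false as stated. (Note also that the stable limit $M$ of your ascending chain is $\alpha$-stable but not visibly normal in $G_2$, so you cannot fall back on normality either.)

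The paper closes exactly this hole by a centre argument: given $N\le K_2$ normal in $G_2$ with $N^{\psi_2}\le N$, choose $d$ with $N^d\le K_1$ and set $N_1=N^d\cap Z(K_1)=N^d\cap Z(G_1)$. Being central in $G_1$, the subgroup $N_1$ is automatically normal in $G_1$; it is $\psi_1$-invariant and contained in $K_1$, so simplicity of $\psi_1$ forces $N_1=1$. Since a nontrivial normal subgroup of a nilpotent group meets the centre nontrivially, $N^d\cap Z(G_2)=1$ gives $N^d=1$ and hence $N=1$. Replacing your intersection argument by this centre-intersection argument repairs the proof; the remainder of your write-up is sound.
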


\begin{proof}
   We can consider $G_1, G_2 \leqslant G_1^{\ast}$. Let $H_2$ be a subgroup of $G_2$ with finite index. Since $G_1^{\ast} \simeq G_2^{\ast}$, $G_1$ and $G_2$ are commensurable. Consequently, $G_1, G_2, G_1 \cap G_2$, and $G_1 \cap H_2$ have finite indices in $G = \left\langle G_1, G_2 \right\rangle$. Let $m = [G:\text{core}_G(G_1 \cap H_2)]$. Thus, we have $G \leqslant {(G_1 \cap H_2)}^{1/m} \leqslant {G_1}^{1/m}$.

    Suppose that $G_1$ is strongly recurrent self-similar. As $H_1= {(G_1 \cap H_2)}^{m^c}$ has finite index in $G_1$, there exists $K_1 \leqslant H_1$ of finite index and a simple recurrent virtual endomorphism $f:K_1 \rightarrow G_1$, which is an isomorphism.

Let $\phi$ be the automorphism of $G_1^{\ast}$ induced by $f$. Since $G_1 = K_1^f = K_1^{\phi}$, it follows that $G_1^{{\phi}^{-1}} = K_1$. Now, observe that ${\phi}^{-1} \in \text{Aut}(G_1^{\ast})$, and by Lemma \ref{nc},
$$
G_2^{{\phi}^{-1}} 
\leqslant G^{{\phi}^{-1}} 
\leqslant ({G_1}^{1/m})^{{\phi}^{-1}} 
\leqslant ({G_1}^{{\phi}^{-1}})^{1/m}$$
$$\leqslant {K_1}^{1/m}
\leqslant {((G_1 \cap H_2)}^{m^c})^{1/m}
\leqslant G_1 \cap H_2 \leqslant G_2.
$$
Thus, $[G_2: G_2^{\phi^{-1}}]$ is finite, as implied by Lemma \ref{index}.

Let us prove that $G_2$ is strongly recurrent self-similar. Observe that $G_2^{{\phi}^{-1}} \leqslant G_1 \cap H_2 \leqslant G_2$, which provides that the restriction of $\phi$ to $G_2^{{\phi}^{-1}}$ is a recurrent virtual endomorphism of $G_2$.


Let $N$ be a subgroup of $G_2^{{\phi}^{-1}}$, normal in $G_2$ such that $N^{\phi} \leqslant N$. There exists a positive natural $d$ such that $(G_2^{{\phi}^{-1}})^d \leqslant K_1$. We have that  $N^{d}  \leqslant K_1$ and ${N^d}^{\phi} \leqslant N^d$. Consider $N_1 = N^d \cap Z(K_1)$. Observe that 
$$
N_1^{\phi} 
\leqslant {N^d}^{\phi} \cap Z(K_1)^{\phi}
\leqslant N^d \cap  Z(G_1) = N^d \cap K_1 \cap Z(G_1) =  N^d \cap Z(K_1)  = N_1.
$$ 
Furthermore,
$$
N_1= N^d \cap Z(K_1) = N^d \cap  Z(G_1).
$$
Therefore $N_1$ is a subgroup of $K_1$, normal in $G_1$ and $N_1^f =  N_1^{\phi} \leqslant N_1$. As $f$ is simple, $N_1= 1$. Again we have
$$
1 = N_1=  N^d \cap  Z(G_1) = N^d \cap G_2^{\phi^{-1}}  \cap Z(G_1) = N^d \cap Z(G_2^{\phi^{-1}}) = N^d \cap Z(G_2), 
$$ 
which occur only if $N^d=1$, because  $Z(G_2)$ intercepts non-trivially every  proper normal subgroup of $G_2$.  Then $N= 1$ and $G_2$ is strongly recurrent self-similar.
\end{proof}

\subsection{Geometric Equivalence}

The concept of geometrical equivalence of groups  arises from a Galois type correspondence between subsets of homomorphisms of a free group and subsets of the free group. Let $F$ be a free group of finite rank and $G$ a group. If $S \subset F$, consider the set
$$
\overline{S} = \{ \sigma \in \mbox{Hom}(F,G) \; | \; S \subset \mbox{ker} \sigma \}. 
$$
For $T \subset \mbox{Hom}(F,G)$, consider 
$$
\overline{T} = \bigcap_{ \alpha \in T} \mbox{ker} \alpha.
$$
From these sets, we obtain 
$$
\overline{\overline{S}} = \bigcap_{ \sigma \in \overline{S}} \mbox{ker} \sigma \;\; \mbox{and} \;\;
\overline{\overline{T}} = \{ \sigma \in \mbox{Hom}(F,G) \; | \; \overline{T} \subset \mbox{ker} \sigma \}. 
$$

\begin{defin}
    {\rm
The subset $S \subset F$ is called $G-closed$ if $S = \overline{\overline{S}}$. Equivalently, $T \subset$ Hom$(F,G)$ is $G-closed$ if $T = \overline{\overline{T}}$. Two groups $G_1$ and $G_2$ are $geometrically$ $equivalent$ if, for any free group of finite rank $F$ and any subset $S$ of $F$, we have that $S$ is $G_1$-closed if, and only if, $S$ is $G_2$-closed.
}
\end{defin}

Its easy to see that isomorphic groups are geometrically equivalent. The following facts proved in \cite{BG} are relatively close to strongly recurrent self-similar  $\mathfrak{T}$-groups:

\begin{lem} {\rm (\cite{BG}, Assertion 1.1)} \label{KHG}
Suppose that a group $K$ is a subgroup of a group $H$, which is a subgroup of a
group $G$ ($K \leqslant H \leqslant G$) and the groups $K$ and $G$ are geometrically equivalent. Then the group $H$
is geometrically equivalent to the groups $K$ and $G$.
\end{lem}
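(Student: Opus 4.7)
The plan is to reinterpret geometric equivalence as an equality of two closure operators on subsets of $F$, and then to sandwich the $H$-closure between the $K$-closure and the $G$-closure using the chain of inclusions.

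First, for each group $L$ and each subset $S$ of a finitely generated free group $F$, I would write $\overline{\overline{S}}^{L}$ for the closure $\overline{\overline{S}}$ computed using $\mathrm{Hom}(F,L)$. The standard verification shows that $S \mapsto \overline{\overline{S}}^{L}$ is an extensive, monotone, idempotent closure operator on subsets of $F$; in particular $\overline{\overline{S}}^{L}$ is always $L$-closed. I would then prove the key reformulation: two groups $G_1$ and $G_2$ are geometrically equivalent if and only if $\overline{\overline{S}}^{G_1} = \overline{\overline{S}}^{G_2}$ for every $S \subseteq F$ and every free group $F$ of finite rank. The reverse direction is immediate. For the forward direction, observe that $T := \overline{\overline{S}}^{G_1}$ is $G_1$-closed by idempotency, hence $G_2$-closed by hypothesis, so $T = \overline{\overline{T}}^{G_2}$; combining this with $S \subseteq T$ and monotonicity gives $\overline{\overline{S}}^{G_2} \subseteq \overline{\overline{T}}^{G_2} = T = \overline{\overline{S}}^{G_1}$, and the reverse inclusion follows by symmetry.

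With this reformulation in hand, the chain $K \leqslant H \leqslant G$ induces inclusions $\mathrm{Hom}(F,K) \subseteq \mathrm{Hom}(F,H) \subseteq \mathrm{Hom}(F,G)$ by post-composition, and therefore $\overline{S}^{K} \subseteq \overline{S}^{H} \subseteq \overline{S}^{G}$. Taking the intersection of kernels over larger indexing sets yields smaller intersections, so
\[
\overline{\overline{S}}^{K} \supseteq \overline{\overline{S}}^{H} \supseteq \overline{\overline{S}}^{G}.
\]
Combined with the hypothesis $\overline{\overline{S}}^{K} = \overline{\overline{S}}^{G}$, this forces equality throughout, giving $\overline{\overline{S}}^{H} = \overline{\overline{S}}^{K} = \overline{\overline{S}}^{G}$ for every $S$ and every $F$. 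By the reformulation, $H$ is geometrically equivalent to both $K$ and $G$.

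The only step that requires any real thought is the reformulation of geometric equivalence as the equality $\overline{\overline{S}}^{G_1} = \overline{\overline{S}}^{G_2}$ of closure operators, since the original definition is phrased only in terms of the fixed points of these operators (the closed sets). Once this mild upgrade is established, the rest of the argument is automatic: it uses only that $\overline{\overline{\cdot}}^{L}$ is antitone in the target group $L$, which the inclusions $K \leqslant H \leqslant G$ translate directly into the desired sandwich.
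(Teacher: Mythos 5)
Your argument is correct and complete. Note, however, that the paper does not prove this statement at all: it is quoted verbatim from Bludov--Gusev (\cite{BG}, Assertion 1.1), so there is no internal proof to compare against; your reformulation of geometric equivalence as equality of the closure operators $S \mapsto \overline{\overline{S}}^{L}$, followed by the antitonicity of this operator in the target group along $K \leqslant H \leqslant G$, is essentially the standard argument for that assertion. The only cosmetic point is that $\mathrm{Hom}(F,K) \subseteq \mathrm{Hom}(F,H)$ is an identification via post-composition with the inclusion rather than a literal containment, but since this identification preserves kernels, the sandwich $\overline{\overline{S}}^{G} \subseteq \overline{\overline{S}}^{H} \subseteq \overline{\overline{S}}^{K}$ and hence your conclusion are unaffected.
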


\begin{lem} {\rm (\cite{BG}, Theorem 3.1)} \label{GHn} 
Let $G$ be a torsion free nilpotent group and let $G^{\ast}$ be its Malcev completion.
The groups $G$ and $G^{\ast}$ are geometrically equivalent if and only if the group $G$ is geometrically
equivalent to the subgroup $G^n$ for each natural $n$.
\end{lem}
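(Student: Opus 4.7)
The plan is to exploit the containments $G^n \leqslant G \leqslant G^{\ast}$ in conjunction with Lemmas \ref{KHG} and \ref{Mn}. By Lemma \ref{KHG}, whenever the extremes of such a chain are geometrically equivalent, the middle term becomes equivalent to both; this reduces both implications essentially to understanding how $G^n$ and $G^{\ast}$ relate geometrically, and lets me use the sandwich structure rather than comparing $G$ and $G^{\ast}$ directly.

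For the ($\Leftarrow$) direction, I would fix a finitely generated free group $F$, a subset $S \subset F$, and aim to prove $\overline{\overline{S}}_G = \overline{\overline{S}}_{G^{\ast}}$. One inclusion is automatic since every homomorphism $F \to G$ lifts through the embedding $G \hookrightarrow G^{\ast}$. For the nontrivial inclusion, I take $x \in \overline{\overline{S}}_G$ and a homomorphism $\beta : F \to G^{\ast}$ with $S \subseteq \ker \beta$. The image $\beta(F)$ is finitely generated in $G^{\ast}$, so Lemma \ref{Mn} produces a natural number $n$ with $\beta(F)^{n} \leqslant G$. The key step is to use this $n$ to manufacture from $\beta$ a homomorphism whose image lies inside $G^{n}$ (perhaps after first restricting to an appropriate verbal subgroup of $F$ and then extending) that still contains $S$ in its kernel; the hypothesis $G \sim G^{n}$ then forces $x$ into its kernel, and tracking the relations between the various closures of $S$ yields $\beta(x) = 1$.

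For ($\Rightarrow$), given $G \sim G^{\ast}$ and a natural $n$, I apply Lemma \ref{KHG} to the chain $G^n \leqslant G \leqslant G^{\ast}$; it then suffices to establish $G^n \sim G^{\ast}$. Since $G^n$ has finite index in $G$ (because $G$ is a $\mathfrak{T}$-group) and $(G^n)^{\ast} = G^{\ast}$ (because $G^{\ast}$ is closed under $n$-th roots), homomorphisms $F \to G^{\ast}$ arising in the closure operation on each side can be compared using the assumed equivalence $G \sim G^{\ast}$. The argument closely parallels the reverse direction, with the roles of $G$ and $G^{\ast}$ exchanged at the relevant step.

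The main obstacle is that in a non-abelian nilpotent group the $n$-th power map is not a homomorphism, so one cannot simply convert $\beta : F \to G^{\ast}$ into a homomorphism $F \to G^{n}$ by composing with the $n$-th power. The actual construction must leverage Lemma \ref{nc}, which guarantees that inside a nilpotent group of class $c$ every element of $G^{n^{c}}$ is a genuine $n$-th power, together with the Malcev coordinate structure on $G^{\ast}$; these allow one to extract from $\beta$ an auxiliary homomorphism compatible with the power-subgroup structure. Keeping $S$ inside the kernel through this passage, and doing so uniformly enough over all choices of $\beta$ to conclude $S$-closedness, is the heart of the proof and the step I expect to require the most care.
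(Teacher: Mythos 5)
This lemma is not proved in the paper at all: it is imported verbatim from Bludov--Gusev (\cite{BG}, Theorem 3.1), so there is no internal argument to compare yours against. Judged on its own, your proposal is a plan rather than a proof, and the plan stops exactly where the theorem lives. In the ($\Leftarrow$) direction you correctly reduce to: given $\beta:F\to G^{\ast}$ with $S\subseteq\ker\beta$ and $\beta(F)$ finitely generated, hence $\beta(F)^{n}\leqslant G$ for some $n$ by Lemma \ref{Mn}, produce a homomorphism related to $\beta$ whose image lies in $G^{n}$ (or $G$) and whose kernel still contains $S$, with enough control to conclude $\beta(x)=1$ for $x\in\overline{\overline{S}}_{G}$. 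You then acknowledge that the $n$-th power map is not a homomorphism and say the construction ``must leverage Lemma \ref{nc} and the Malcev coordinate structure,'' but you never exhibit it. That is the entire content of the statement: in the non-abelian nilpotent setting there is in general no endomorphism of $G^{\ast}$ (no ``division by $n$'') carrying a prescribed finitely generated subgroup $M\leqslant G^{\ast}$ into $G$, so the ``auxiliary homomorphism compatible with the power-subgroup structure'' is not known to exist, and Lemma \ref{nc} by itself only says elements of $G^{n^{c}}$ have $n$-th roots in $G$ --- it does not convert $\beta$ into a new homomorphism nor track $S$ through any such conversion.

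The ($\Rightarrow$) direction has the same problem one level up: reducing via Lemma \ref{KHG} (or simply transitivity of geometric equivalence) to ``$G^{n}\sim G^{\ast}$'' is legitimate, but you then assert this ``closely parallels the reverse direction,'' i.e.\ you defer it to an argument you have not given. Two smaller points: the lemma is stated for torsion-free nilpotent groups that are not assumed finitely generated, so your parenthetical ``$G^{n}$ has finite index in $G$ because $G$ is a $\mathfrak{T}$-group'' is not available as stated (and is not actually used); and in the ($\Leftarrow$) direction Lemma \ref{KHG} cannot be applied to the chain $G^{n}\leqslant G\leqslant G^{\ast}$ from the hypothesis alone, since the hypothesis relates the bottom two terms rather than the extremes --- which is presumably why you switch to a direct closure computation there, where the gap described above sits. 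In short: correct skeleton and correct choice of ingredients (\ref{Mn}, \ref{nc}, \ref{KHG}), but the decisive step is missing, and nothing in the sketch indicates how it would be carried out; the paper itself sidesteps this by citing \cite{BG}.
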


Now we can prove the following result:
\begin{thm}
Strongly recurrent self-similar  $\mathfrak{T}$-groups are geometrically equivalent with their Malcev completions.
\end{thm}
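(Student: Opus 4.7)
The plan is to use Lemma \ref{GHn} to reduce the problem: a $\mathfrak{T}$-group $G$ is geometrically equivalent to $G^{\ast}$ exactly when it is geometrically equivalent to every power subgroup $G^n$ ($n \in \mathbb{N}^{\ast}$). So it suffices to exhibit, for each $n$, a chain $K \leqslant G^n \leqslant G$ with $K \cong G$; Lemma \ref{KHG} will then upgrade the (automatic) geometric equivalence of the isomorphic groups $K$ and $G$ to one between $G^n$ and $G$.

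To produce such a $K$, I would fix $n$ and first observe that $G^n$ has finite index in $G$, because $G/G^n$ is a finitely generated nilpotent group of exponent dividing $n$ and is therefore finite. The strongly recurrent self-similar hypothesis applied to the finite-index subgroup $G^n$ then yields a finite-index subgroup $K \leqslant G^n$ together with a simple recurrent virtual endomorphism $f\colon K \to G$. As noted at the start of Section 3, a recurrent virtual endomorphism of a $\mathfrak{T}$-group is automatically an isomorphism: $K$ and $G$ share the same Hirsch length since $[G:K]$ is finite, the map $f$ is surjective by the recurrence hypothesis, and so $\ker f$ is a finite subgroup of the torsion-free group $K$, hence trivial. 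Thus $K \cong G$, so Lemma \ref{KHG} applied to the sandwich $K \leqslant G^n \leqslant G$ delivers that $G^n$ is geometrically equivalent to $G$. Since $n$ was arbitrary, Lemma \ref{GHn} then concludes that $G$ is geometrically equivalent to $G^{\ast}$.

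The argument is essentially a short chain of invocations of Lemmas \ref{KHG} and \ref{GHn} together with the definition of strongly recurrent self-similarity, so I do not anticipate a real obstacle. The only point that deserves a brief verification is that the simple recurrent virtual endomorphism $f\colon K \to G$ is truly an isomorphism, which is handled by the Hirsch-length argument sketched above; the rest is a diagram chase.
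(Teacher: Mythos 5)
Your argument is correct and follows essentially the same route as the paper's proof: invoke the strongly recurrent self-similar hypothesis on $G^n$ to get a finite-index $K \leqslant G^n$ with $K \cong G$ (via the fact that recurrent virtual endomorphisms of $\mathfrak{T}$-groups are isomorphisms), then apply Lemma \ref{KHG} to the chain $K \leqslant G^n \leqslant G$ and finish with Lemma \ref{GHn}. Your added justification that $f$ is an isomorphism (Hirsch length plus torsion-freeness) is a point the paper only asserts in passing at the start of Section 3, so it is a welcome, if minor, amplification.
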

\begin{proof}
Let $G$ be strongly recurrent self-similar  $\mathfrak{T}$-group. As $G^n$ has finite index in $G$, there exists a subgroup $K$ of $G^n$ of finite index and a simple recurrent virtual endomorphism $f:K \rightarrow G$. Thus $K \leq G^n \leq G$ and $K$ and $G$ are isomorphic, so geometrically equivalent. By Lemma \ref{KHG}, $G$ is geometrically equivalent to  $ G^n$, for every natural $n$. By Lemma \ref{GHn}, the result follows. 
\end{proof}

\section{The group ${N_{3,4}}$}

We introduce here the aforementioned
$N_{3,4}$ group of Bludov-Gusev and details about its commutator structure established in  \cite{BG}. $N_{3,4}$ is a nilpotent group of class $3$ generated by $a,b,c,d$ and defining relations:

\begin{equation} \label{1}
[a, [a, b]] = [a, b, b] = [a, [c, d]] = [a, d, d] = 1,
\end{equation}
\begin{equation} \label{2}
[b, [b, c]] = [b, [c, d]] = [c, [c, d]] = [c, d, d] = 1,
\end{equation}
\begin{equation} \label{3}
[a, [b, c]]\cdot [a, c, b] = [a, [b, d]] \cdot [a, d, b] = 1,
\end{equation}
\begin{equation}
[a, [a, d]] \cdot [b, c, c]^{-1} = [a, [a, c]] \cdot [b, [b, d]]^{-1} = 1,
\end{equation}
\begin{equation}\label{4}
[a, c, b] \cdot [b, d, d]^{-1} = [a, c, c] \cdot [b, d, c]^{-1} = [a, d, b] \cdot [a, d, c]^{-1} = 1,
\end{equation}
\begin{equation}\label{5}
[a, b] \cdot [a, c, c]^{-1} = [c, d] \cdot [a, [a, c]]^{-1} = 1.
\end{equation}

\subsection{Facts about the group $\mathbf{N_{3,4}}$} 

\begin{enumerate}
    \item [1.] The sets 
    $$\{[a,c], [a,d], [b,c], [b,d]\}$$  and  
$$\{[a,b], [c,d], [a,[a,d]], [a,[b,c]], [a,[b,d]]\}$$
form a basis of the free abelian groups $Z_2(G)/Z(G)$ and $Z(G)$, respectively.
    \item[2.] $G$ is a $\mathfrak{T}_3$-group.
    \item[3.]  The upper central series
    $$1=Z_0(G) \leqslant Z_1(G) \leqslant Z_2(G) \leqslant Z_3(G)=G$$
  and the lower central series
    $$1=\gamma_4(G)\leqslant \gamma_3(G) \leqslant \gamma_2(G)\leqslant \gamma_1(G)=G$$
    coincide.  \\
\end{enumerate}

\subsection{virtual endomorphisms of $\mathbf{{N_{3,4}}}$}

Consider $G = N_{3,4}$, $H$ a finite index subgroup of $G$ and $f: H \to G$ a virtual endomorphism of $G$. Let $m,n,k,j$ be positive integers such that 
$$a_1 = a^m, b_1 = b^n, c_1 = c^k, d_1 = d^j \in H.$$ 
From now on we will denote by $K$ the group generated by $a_1,b_1,c_1$ and $d_1$, which has finite index in $G$ (see \cite{BG}, Lemma 1.1). Clearly $K$ satisfies the relations
\begin{equation} \label{add}
[a_1, [a_1, b_1]] = [a_1, b_1, b_1] = [a_1, [c_1, d_1]] = [a_1, d_1, d_1] = 1, 
\end{equation}
\begin{equation}\label{bbc}
[b_1, [b_1, c_1]] = [b_1, [c_1, d_1]] = [c_1, [c_1, d_1]] = [c_1, d_1, d_1] = 1 \;\; \mbox{and}\;\;
\end{equation}
\begin{equation}
[a_1, [b_1, c_1]]\cdot [a_1, c_1, b_1] = [a_1, [b_1, d_1]] \cdot [a_1, d_1, b_1] = 1.
\end{equation}
Some calculations produces:
\begin{equation}
[a_1, [a_1, d_1]]^{nk^2} = [b_1, c_1, c_1]^{m^2j}, 
\end{equation}
\begin{equation}
 [a_1, [a_1, c_1]]^{n^2j} =  [b_1, [b_1, d_1]]^{m^2k},
\end{equation}
\begin{equation} \label{acb-bdd}
[a_1, c_1, b_1]^{j^2} = [b_1, d_1, d_1]^{mk},
\end{equation}
\begin{equation} \label{acc-bdc}
[a_1, c_1, c_1]^{nj} = [b_1, d_1, c_1]^{mk}, 
\end{equation}
\begin{equation}\label{adb-adc}
[a_1, d_1, b_1]^k = [a_1, d_1, c_1]^n, 
\end{equation}
\begin{equation} \label{ab-acc}
[a_1, b_1]^{k^2} = [a_1, c_1, c_1]^{n} \;\; \mbox{and} \;\; [c_1, d_1]^{m^2} = [a_1, [a_1, c_1]]^j.
\end{equation}

\noindent Once $K^f \leqslant  G$, we can write 
$$
{a_1}^f = a^{n_{11}} b^{n_{12}} c^{n_{13}} d^{n_{14}} u_1; 
$$
$$
{b_1}^f = a^{n_{21}} b^{n_{22}} c^{n_{23}} d^{n_{24}} u_2;
$$
$$
{c_1}^f = a^{n_{31}} b^{n_{32}} c^{n_{33}} d^{n_{34}} u_3;
$$
$$
{d_1}^f = a^{n_{41}} b^{n_{42}} c^{n_{43}} d^{n_{44}} u_4;
$$
with $u_1, u_2, u_3, u_4 \in G'$. For $1 \leq i,j \leq 4$, consider the matrix 
$$
N_f = (n_{ij}) = 
\left(
\begin{array}[pos]{cccc}
	n_{11} & n_{12} & n_{13} & n_{14} \\
	n_{21} & n_{22} & n_{23} & n_{24} \\
	n_{31} & n_{32} & n_{33} & n_{34} \\
	n_{41} & n_{42} & n_{43} & n_{44} 
\end{array}
\right)
$$
induced by $f$.

The following lemma was established in \cite{BG}.

\begin{lem}\label{prop1}
If $det(N_f)\neq 0$, then  
$$
{a_1}^f =  a_1  u_1, \;\;\;\;
{b_1}^f =  b_1 u_2, \;\;\;\;
{c_1}^f = c_1  u_3 \;\;\;\; \mbox{and} \;\;\;\; 
{d_1}^f = d_1  u_4,
$$
with $u_1, u_2, u_3, u_4 \in G'$. 
\end{lem}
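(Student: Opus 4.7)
The plan is to apply $f$ to each of the relations \eqref{add}--\eqref{ab-acc} that hold in $K$, translate each resulting identity in $G$ into polynomial constraints on the entries $n_{ij}$ of $N_f$, and then invoke $\det(N_f)\neq 0$ to pin the matrix down to $\mathrm{diag}(m,n,k,j)$, which is equivalent to the statement of the lemma.

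The main tool is class-$3$ nilpotency. It implies that every double commutator $[g,h]$ is bilinear modulo $Z(G)=\gamma_3(G)$ in the images of $g,h$ in $G/G'$, and every triple commutator $[g,h,i]$, which automatically lies in $Z(G)$, is trilinear modulo $G'$. Because relations \eqref{5} place $[a,b]$ and $[c,d]$ inside $Z(G)$, the quotient $Z_2(G)/Z(G)$ is free abelian with basis $\{[a,c],[a,d],[b,c],[b,d]\}$, so computing $[a_1^f, b_1^f] \pmod{Z(G)}$ reduces to reading off four $2\times 2$ minors built from rows $1$ and $2$ of $N_f$; the other six ordered pairs are handled analogously. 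The triple commutators are then expanded in the basis $\{[a,b],[c,d],[a,[a,d]],[a,[b,c]],[a,[b,d]]\}$ of $Z(G)$ using \eqref{3}, \eqref{4}, \eqref{5} to reduce all $4^3$ symbols $[x,y,z]$ to those five basis vectors.

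Next, I would process the relations in three rounds. The vanishing triple-commutator relations in \eqref{add} and \eqref{bbc} (such as $[a_1^f,b_1^f,b_1^f]=1$, $[a_1^f,d_1^f,d_1^f]=1$, $[c_1^f,d_1^f,d_1^f]=1$) produce a family of cubic equations that force many $2\times 2$ minors of $N_f$ (the ones encoding $[a_1^f,b_1^f]$, $[a_1^f,d_1^f]$, etc.\ modulo $Z(G)$) to vanish. The asymmetric cross relations \eqref{acb-bdd}, \eqref{acc-bdc}, \eqref{adb-adc} then couple the four rows of $N_f$ to one another; combined with $\det(N_f)\neq 0$ — which prevents any row from being zero and any pair of rows from being proportional — they eliminate every off-diagonal entry of $N_f$. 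Finally, the calibration relations in \eqref{ab-acc}, which intermingle second- and third-level commutators with the specific exponents $m,n,k,j$ built into $a_1 = a^m$, etc., identify the four diagonal entries as exactly $m, n, k, j$. This yields $N_f = \mathrm{diag}(m,n,k,j)$ and hence the claimed expressions ${a_1}^f = a_1 u_1$, ${b_1}^f = b_1 u_2$, ${c_1}^f = c_1 u_3$, ${d_1}^f = d_1 u_4$ with $u_i \in G'$.

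The principal obstacle is combinatorial bookkeeping: each of the roughly twelve relations contributes up to five scalar equations (one per basis element of $Z(G)$), producing a non-linear system in the sixteen unknowns $n_{ij}$. The structural insight that makes the system tractable is that once $\det(N_f)\neq 0$ rules out degenerate collapses, the over-determined relation set forces each row of $N_f$ to be supported on a single column, and the diagonal entries are then uniquely determined by the exponents $m,n,k,j$ through the mixed relations \eqref{ab-acc}.
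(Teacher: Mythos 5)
Two contextual points first: the paper itself does not prove this lemma at all — it is quoted as established in \cite{BG} — so what you are really attempting is a reconstruction of Bludov--Gusev's computation. Your general framework is the correct one and matches how such a statement is verified: express the images of $a_1,b_1,c_1,d_1$ modulo $G'$ through $N_f$, use that $[x,y]$ modulo $\gamma_3(G)=Z(G)$ is bilinear in $x,y$ modulo $G'$ and that triple commutators lie in $Z(G)$ and depend trilinearly on the arguments modulo $G'$, expand everything in the stated bases of $Z_2(G)/Z(G)$ and $Z(G)$, and apply $f$ to the relations \eqref{add}--\eqref{ab-acc} holding in $K$.

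However, what you have written is a plan rather than a proof, and one of its announced mechanisms is false. You claim that the vanishing triple-commutator relations \eqref{add}--\eqref{bbc} force the $2\times 2$ minors encoding $[a_1^f,b_1^f]$, $[a_1^f,d_1^f]$, etc.\ modulo $Z(G)$ to vanish. This cannot be right: $[x,y,y]=1$ does not place $[x,y]$ in $Z(G)$ (already in $G$ one has $[a,d,d]=1$ by \eqref{1} while $[a,d]\notin Z(G)$, since $[a,d]$ is part of a basis of $Z_2(G)/Z(G)$), and the very map the lemma describes — e.g.\ the inclusion $K\hookrightarrow G$, with $N_f=\mathrm{diag}(m,n,k,j)$ and $\det N_f\neq 0$ — satisfies all the relations yet has the $[a,d]$-component of $[a_1^f,d_1^f]$ equal to $mj\neq 0$. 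What is true, and what the argument actually needs, is that $[a_1^f,b_1^f]$ and $[c_1^f,d_1^f]$ lie in $Z(G)$; but that follows from applying $f$ to \eqref{ab-acc} and using that $Z_2(G)/Z(G)$ is torsion-free, not from \eqref{add}--\eqref{bbc}. More importantly, the heart of the lemma — that the resulting nonlinear system in the sixteen entries $n_{ij}$, under the sole hypothesis $\det N_f\neq 0$, admits no solution other than $\mathrm{diag}(m,n,k,j)$ — is asserted three times (``force'', ``eliminate every off-diagonal entry'', ``identify the four diagonal entries'') but never derived: no equation is written down, and no argument excludes a non-diagonal invertible $N_f$ satisfying the constraints. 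Since that elimination is the entire content of the statement, the proposal as it stands does not establish the lemma; it would only do so after the explicit expansion and solution of the system (which is precisely the computation carried out in \cite{BG}).
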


Let $G$ be a group. An IA-automorphism of $G$ is an automorphism of $G$ which induces in $G/G'$ the identity endomorphism. The set of all IA-automorphism of a group $G$ is denoted by $IA(G)$.

It is known that $IA(G)$ is a normal subgroup of $Aut(G)$ and Inn$(G) \leqslant IA(G)$. Furthermore, if $G$ is a $\mathfrak{T}_c$-group, then $IA(G)$ is a $\mathfrak{T}_{c-1}$-group. Moreover, if $G$ is a nilpotent group, $\gamma_j(IA(G))$ transforms each $\frac{\gamma_i(G)}{\gamma_{i+j}(G)}$ identically (see Theorem 2.1 in \cite{Zyman1}). In particular, if $G$ has nilpotency class $c$ and $\alpha \in IA(G)$, then $x^{\alpha} = x$, for all $ x \in \gamma_c(G)$.

Lemma \ref{prop1} means that the induced homomorphism 
$$\overline{f}: K G'/G' \to G/G'$$
$$xG' \mapsto x^f G'$$
acts like an IA-homomorphism: $ x^f G' = xG'$. This is crucial in the Proposition \ref{prop2}.

\section{Non-self-similarity of $N_{3,4}$}

\begin{prop}\label{prop2}
Suppose that $det(N_f)\neq 0$. Let $f:H \to N_{3,4}$ be a virtual endomorphism of $N_{3,4}$, then $Z(K)^f\leqslant Z(K).$ 
\end{prop}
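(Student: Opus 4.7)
The plan is to establish the stronger statement that $f$ acts as the identity on $Z(K)$, which immediately yields $Z(K)^f \leqslant Z(K)$. I would carry this out in three steps.

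\emph{Step 1: Identify $Z(K)$.} Since $G = N_{3,4}$ has class $3$ with coinciding upper and lower central series, $Z_2(G) = G'$, $Z(G) = \gamma_3(G)$, $[G', G'] = 1$, and $[G', G] \subseteq Z(G)$. I claim $Z(K) = Z(G) \cap K$. Only the inclusion $Z(K) \subseteq Z(G)$ needs argument: for $z \in Z(K)$, the class-$3$ collection formula
\[
[z, y^t] = [z, y]^t \, [z, y, y]^{\binom{t}{2}}
\]
applied with $(y, t)$ ranging over $(a, m), (b, n), (c, k), (d, j)$ and combined with $[z, y^t] = 1$, first forces $[z, y] \in Z(G)$ (by projecting to the torsion-free quotient $G'/Z(G)$) and then $[z, y] = 1$ (by torsion-freeness of $Z(G)$); extending by bilinearity yields $z \in Z(G)$.

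\emph{Step 2: Generating set for $Z(K)$.} Because $Z(G) \subseteq G'$, every $z \in Z(G) \cap K$ is trivial in $G/G' \cong \mathbb{Z}^4$; since the images of $a_1, b_1, c_1, d_1$ in $G/G'$ are the $\mathbb{Z}$-independent elements $m\bar{a}, n\bar{b}, k\bar{c}, j\bar{d}$, the induced map $K/K' \to G/G'$ is injective, and hence $z \in K'$. Using that $[a,b]$ and $[c,d]$ already lie in $Z(G)$ by relations (\ref{5}), while the cosets of $[a,c], [a,d], [b,c], [b,d]$ form a basis of $Z_2(G)/Z(G)$, the only $\mathbb{Z}$-linear combinations of double commutators of generators of $K$ that land in $Z(G)$ are those in $\langle [a_1, b_1], [c_1, d_1]\rangle$. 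Hence $Z(K) = \langle [a_1, b_1], [c_1, d_1] \rangle \cdot \gamma_3(K)$, where $\gamma_3(K)$ is generated by the triple commutators in $a_1, b_1, c_1, d_1$.

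\emph{Step 3: $f$ fixes each generator.} By Lemma \ref{prop1}, $x_1^f = x_1 u_{x_1}$ with $u_{x_1} \in G'$ for each $x_1 \in \{a_1, b_1, c_1, d_1\}$. A direct expansion of $[x_1 u, y_1 v, z_1 w]$ in class $3$, using $[G', G'] = 1$ and $[G', G] \subseteq Z(G)$ to absorb or kill every term involving $u, v, w$, yields $[x_1^f, y_1^f, z_1^f] = [x_1, y_1, z_1]$, so every triple commutator of generators is fixed by $f$. For the two distinguished double commutators I would apply $f$ to the $K$-relations (\ref{ab-acc}): the identity $[a_1, b_1]^{k^2} = [a_1, c_1, c_1]^n$ becomes
\[
([a_1, b_1]^f)^{k^2} = [a_1, c_1, c_1]^n = [a_1, b_1]^{k^2},
\]
since the triple commutator on the right is already fixed; torsion-freeness of $Z(G)$ then forces $[a_1, b_1]^f = [a_1, b_1]$, and the second half of (\ref{ab-acc}) gives $[c_1, d_1]^f = [c_1, d_1]$ in exactly the same way. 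Thus $f|_{Z(K)} = \mathrm{id}_{Z(K)}$.

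The main technical hurdle is the triple-commutator identity in Step 3, which requires careful bookkeeping with the class-$3$ commutator calculus; once that is granted, the double-commutator generators are handled cleanly using only the $N_{3,4}$-specific relations collected in (\ref{ab-acc}), and the pointwise fixing of every generator of $Z(K)$ concludes the proof.
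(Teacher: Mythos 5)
Your proof is correct, but it takes a genuinely different route from the paper's. The paper never touches an explicit generating set of $Z(K)$: it extends the isomorphism $f\colon K\to K^f$ to an automorphism $\widetilde f$ of the Malcev completion $G^{\ast}$, uses Lemma \ref{prop1} together with torsion-freeness of $G/G'$ to show that the restriction $\phi=\widetilde f|_G$ maps $G$ into $G$ and induces the identity on $G/G'$, invokes Bludov's result that IA-endomorphisms of nilpotent groups are IA-automorphisms, and then applies the holomorph identity $[\gamma_i(G),\gamma_j(IA(G))]\leqslant\gamma_{i+j}(G)$ to conclude that $\phi$ (hence $f$) fixes $\gamma_3(G)=Z(G)\supseteq Z(K)$ pointwise. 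You instead stay entirely inside $K$: you identify $Z(K)=K\cap Z(G)$ by class-$3$ collection, exhibit the generators $[a_1,b_1]$, $[c_1,d_1]$ and the triple commutators (using that $K/K'\cong\mathbb{Z}^4$ injects into $G/G'$ and that $[a,b],[c,d]\in Z(G)$ while $[a,c],[a,d],[b,c],[b,d]$ are independent modulo $Z(G)$), and verify that $f$ fixes each generator by direct commutator calculus plus the derived relations (\ref{ab-acc}). Both arguments actually prove the stronger statement $f|_{Z(K)}=\mathrm{id}$. The paper's route is shorter once the cited machinery (Malcev extension, Bludov's lemma, the holomorph inequality) is granted and yields the extra information that $\phi\in IA(G)$ fixes all of $Z(G)$; yours is more elementary and self-contained but is tailored to the specific presentation of $N_{3,4}$, leaning on the relations (\ref{5}) and (\ref{ab-acc}) and on the careful class-$3$ bookkeeping you flag in Step 3. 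For the containment you need, note also that only the inclusion $Z(K)\leqslant\langle[a_1,b_1],[c_1,d_1]\rangle\gamma_3(K)$ is required, so the exact equality in your Step 2 is not essential.
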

\begin{proof}
By Lemma \ref{prop1}, $f:H \to N_{3,4}$ is a monomorphism and then $f: K \to K^f$ is a isomorphism. Observe that $G=N_{3,4}$, $K$ and $K^f$ have isomorphic Malcev Completions, because $K$ and $K^f$ have finite index in $G$ (see \cite{Segal}, corollaries 6 and 4). The isomorphism $f: K \to K^f$ extends to an automorphism  $\widetilde{f}: G^* \to G^*$, where $G^*$ is the Malcev completion of $G$ and $\widetilde{f} \big|_K = f$. 

Let $\phi = \widetilde{f} \big|_G :G \to G^*$ be the restriction of $\widetilde{f}$ to $G$. If $g \in G$, there exist a positive integer $t$ such that $g^t \in K$. So we have: 
$$
(g^t)^f G' \;\; = \;\; g^t G' \;\;= \;\; (g G')^t.
$$  
On the other hand, 
$$
(g^t)^f G' \;\; = \;\; (g^t)^{\phi} G' \;\;= \;\; (g^{\phi} G')^t.
$$
Therefore $(g G')^t = (g^{\phi} G')^t$ and, as $G/G'$ is torsion free, $g G'= g^{\phi} G'$. We conclude that $g^{\phi} = g z$, with $z \in G'$, i.e., $G^{\phi} \leqslant G$ and $\phi \in  \;$End$(G)$. In fact $\phi$ is an IA-endomorphism of $G$. By  Lemma 2.4 in \cite{Bludov2}, IA-endomorphisms of nilpotent groups are IA-automorphisms, so $\phi \in $ IA$(G)$.

 In the group $Hol(G) = G\rtimes Aut(G)$, the {\it holomorph} of $G$, it is known that 
 $[\gamma_i(G), \gamma_j(IA(G)] \leqslant \gamma_{i+j}(G)$
 for all naturals $i,j$ (see theorem 7.13 in \cite{tong}). According this fact, we have that $[\gamma_3(G),IA(G)] = 1.$ As $\gamma_3(G) = Z(G)$, $[G:K]$ is finite and $[Z(G),IA(G)] = 1$, it follows that $Z(K) = K \cap Z(G)$ and if $x \in Z(K)$, then $[x,\phi]  = 1$, which implies that $x^{\phi} = x$. Now, as $x \in K$, we obtain $x^f = x^{\phi} = x$. Therefore $Z(K)^f \leqslant Z(K)$.
\end{proof}

The following proposition is a re-reading of the Lemma 3.3 in \cite{BG} and its proof is analogous. 

\begin{prop}\label{prop3}
Let $f:H \to N_{3,4}$ be a virtual endomorphism such that $det(N_f)=0$. Then, there exists $W$ a nontrivial subgroup of $K$ which is normal in $N_{3,4}$ and $W \leqslant Ker(f)$.
\end{prop}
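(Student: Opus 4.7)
The plan is to use the hypothesis $\det(N_f)=0$ to produce an element $x\in K$ with $x\notin G'$ yet $x^f\in G'$, and then to manufacture a nontrivial central element of $\ker f\cap K$ by iterated commutation. Since the rows of $N_f$ record the images of $a_1,b_1,c_1,d_1$ under $f$ in $G/G'\cong\mathbb{Z}^4$, the vanishing determinant yields a nonzero integer vector $(\alpha_1,\alpha_2,\alpha_3,\alpha_4)$ annihilating the rows, so that
\[ x := a_1^{\alpha_1}b_1^{\alpha_2}c_1^{\alpha_3}d_1^{\alpha_4}\in K \quad\text{satisfies}\quad x^f \in G'. \]
On the other hand the image of $x$ in $G/G'$ equals $(m\alpha_1,n\alpha_2,k\alpha_3,j\alpha_4)$, which is nonzero because $m,n,k,j>0$; hence $x\notin G'$.

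By Fact 3 the upper and lower central series of $G$ coincide, so $G'=Z_2(G)$, and $x\notin Z_2(G)$ delivers elements $y,z\in G$ with $w_0:=[x,y,z]\neq 1$; this $w_0$ lies in $\gamma_3(G)=Z(G)$, which is torsion-free by Fact 1. Since $K$ has finite index in $G$, I can pick positive integers $s,t$ with $y^s,z^t\in K$. A routine class-3 commutator computation, invoking $[x,y^s]\equiv [x,y]^s\pmod{Z(G)}$ together with $[[x,y]^s,z^t]=[x,y,z]^{st}$ (both valid because $[G',G]\leqslant Z(G)$ and $[Z(G),G]=1$), gives the clean identity $w:=[x,y^s,z^t]=w_0^{st}$. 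In particular $w\in K\cap Z(G)$ and $w\neq 1$ by torsion-freeness of $Z(G)$.

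To close the argument I would check $w^f=1$: applying $f$ and using $x^f\in G'$ forces $[x^f,(y^s)^f]\in [G',G]=Z(G)$, so the further commutator with $(z^t)^f$ lies in $[Z(G),G]=1$. Then $W:=\langle w\rangle$ is a nontrivial infinite cyclic subgroup of $K\cap\ker f$, and its centrality makes it automatically normal in $G=N_{3,4}$, as required. The main obstacle, in my view, is not the existence part but the bookkeeping of the class-3 commutator identity $[x,y^s,z^t]=[x,y,z]^{st}$: once this bilinear behavior of the iterated commutator is in hand, the remaining ingredients (the coincidence of the two central series, torsion-freeness of $Z(G)$, and $f$ being a homomorphism) slot together without friction, in parallel to Lemma 3.3 of \cite{BG}.
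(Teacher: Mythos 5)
Your proof is correct, and it takes a route that is genuinely more self-contained than what the paper does: the paper offers no argument for Proposition \ref{prop3}, deferring to Lemma 3.3 of \cite{BG}, whose proof is a computation with the defining relations of $N_{3,4}$ and the induced relations on $K=\langle a^m,b^n,c^k,d^j\rangle$ recorded in (\ref{add})--(\ref{ab-acc}), and which extracts specific central elements lying in the kernel. You instead use only the singularity of $N_f$ together with the structural facts the paper lists: the left null vector of $N_f$ gives $x\in K$ with $x^f\in G'$, while the image of $x$ in $G/G'\cong\mathbb{Z}^4$ is $(m\alpha_1,n\alpha_2,k\alpha_3,j\alpha_4)\neq 0$ (the freeness of $G/G'$ on $a,b,c,d$ is exactly what makes $N_f$ well defined, so this is part of the paper's standing setup); since the upper and lower central series coincide, $G'=Z_2(G)$ and $\gamma_3(G)=Z(G)$, so $x\notin Z_2(G)$ yields $y,z$ with $[x,y,z]\neq 1$; the class-$3$ bilinearity identities $[x,y^s,z^t]=[x,y,z]^{st}$, $[G',G]\leqslant Z(G)$ and $[Z(G),G]=1$ then give a nontrivial $w\in K\cap Z(G)$ with $w^f=[x^f,(y^s)^f,(z^t)^f]=1$ (here $x,y^s,z^t\in K\leqslant H$, so applying $f$ is legitimate, and $w\neq 1$ because $Z(G)$ is free abelian), and $W=\langle w\rangle$ is central, hence normal in $N_{3,4}$, nontrivial, contained in $K$ and in $\ker f$ — precisely the statement. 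What your argument buys is independence from the particular relations of $N_{3,4}$: it works verbatim for any $\mathfrak{T}_3$-group whose abelianization is free on the chosen generators and satisfies $Z_2(G)=G'$. What the relation-based route of \cite{BG} buys is sharper information about which central elements are annihilated (indeed essentially the whole centre), which is more than Theorem D needs; for the purpose of this proposition your softer construction suffices.
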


\begin{theorem}
The group $N_{3,4}$ cannot admits a faithful state-closed representation in $\mathcal{T}_m$ for any $m$.
\end{theorem}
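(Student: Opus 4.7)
I would proceed by contradiction. Suppose $G = N_{3,4}$ admits a faithful state-closed representation on $\mathcal{T}_m$. By the criterion recalled in the Preliminaries, there is then $G$-data $(\mathbf{m},\mathbf{H},\mathbf{F})$ whose $\mathbf{F}$-core is trivial. Put $H_0 = \bigcap_{i=1}^s H_i$ and choose positive integers $m,n,k,j$ large enough so that $a^m,b^n,c^k,d^j \in H_0$; set $K = \langle a^m,b^n,c^k,d^j\rangle$. Then $K$ has finite index in $G$ and lies inside every $H_i$, so each restriction $f_i|_K\colon K \to G$ is a virtual endomorphism of $G$ with associated matrix $N_{f_i|_K}$ as in Section~5.2. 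Working with this common $K$ lets me use the single canonical central subgroup $Z(K) = K \cap Z(G)$ throughout.

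\textbf{Case split via the determinant.} Partition the indices as $J = \{\, i : \det(N_{f_i|_K})\neq 0\,\}$ and $I = \{\, i : \det(N_{f_i|_K}) = 0\,\}$. For $i\in J$, Proposition~\ref{prop2} (more precisely, the argument in its proof) produces an IA-automorphism $\phi_i$ of $G$ that restricts to $f_i|_K$ and which, since $[Z(G),IA(G)]=1$, fixes $Z(G)$ pointwise; in particular $f_i$ acts as the identity on $Z(K)$. For $i\in I$, Proposition~\ref{prop3} yields a nontrivial subgroup $W_i \leqslant K$, normal in $G$ and contained in $\ker f_i$; because $G$ is nilpotent, $W_i \cap Z(G) \neq 1$.

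\textbf{Candidate invariant subgroup.} If $I = \emptyset$, then $Z(K)$ itself is normal in $G$, contained in $\bigcap_i H_i$, and fixed pointwise by every $f_i$; since $Z(K)$ has finite index in $Z(G)\neq 1$, it is a nontrivial subgroup of the $\mathbf{F}$-core, contradicting triviality. When $I$ is nonempty, the natural candidate is
\[
N \;=\; Z(K) \;\cap\; \bigcap_{i\in I} W_i.
\]
This $N$ is normal in $G$ (being a subgroup of $Z(G)$), lies inside $K \leqslant \bigcap_i H_i$, is killed by each $f_i$ for $i\in I$, and is fixed pointwise by each $f_i$ for $i\in J$; hence $N$ sits inside the $\mathbf{F}$-core and, once shown nontrivial, yields the desired contradiction.

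\textbf{Main obstacle.} The delicate step is verifying $N \neq 1$. A priori the subgroups $W_i\cap Z(G)$ are only known to be nontrivial subgroups of the free abelian group $Z(G)\cong \mathbb{Z}^5$, and finite intersections of such subgroups can collapse to $1$. To overcome this I would revisit the proof of Proposition~\ref{prop3} (adapted from Bludov--Gusev's Lemma 3.3): $W_i$ is not arbitrary but is produced by forcing the images $a_1^{f_i},b_1^{f_i},c_1^{f_i},d_1^{f_i}$ to satisfy both the defining relations of $N_{3,4}$ and the derived commutator identities in $K$ collected in the previous subsection, under the singularity assumption $\det(N_{f_i|_K}) = 0$. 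Tracking that analysis should place $W_i$ inside $\gamma_3(K)\leqslant Z(K)$ and show that it contains a finite-index subgroup of $\gamma_3(K)$ whose index depends only on $m,n,k,j$, so that $Z(K)\cap\bigcap_{i\in I}W_i$ is automatically of finite index in $Z(K)$ and in particular nontrivial. This commutator bookkeeping, extracting a uniform lower bound on each $W_i$ inside $Z(K)$, is the step I expect to require the most care.
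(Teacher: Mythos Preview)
Your overall strategy --- reduce to a common $K \leqslant \bigcap_i H_i$, split on whether $\det(N_{f_i})$ vanishes, and exploit that each non-degenerate $f_i$ fixes $Z(K)$ \emph{pointwise} (so every subgroup of $Z(K)$ is automatically $f_i$-invariant) --- is exactly the paper's approach.

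The one place you diverge is in handling several degenerate $f_i$ at once. The paper does \emph{not} intersect over $I$: when some $\det(N_{f_i})=0$, it picks a \emph{single} such $f_i$, extracts one $W$ from Proposition~\ref{prop3}, sets $U = W\cap Z(K)$, and asserts that $U$ is $f_j$-invariant for every $j$. For $j$ with $\det(N_{f_j})\neq 0$ this is immediate from pointwise fixing, and for the chosen $i$ it follows from $W\leqslant\ker f_i$; but for any \emph{other} $j\in I$ the paper gives no argument. So the ``main obstacle'' you isolate is a genuine subtlety, and the published proof does not resolve it --- it simply does not raise it.

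Your proposed remedy (argue that each $W_i\cap Z(K)$ has finite index in $Z(K)$, so the finite intersection over $I$ remains nontrivial) is a sensible line, but note that the paper does not carry this out and does not reproduce the proof of Proposition~\ref{prop3}; whether the $W_i$ produced there are indeed large in $Z(K)$ has to be extracted from Bludov--Gusev's Lemma~3.3 directly. In short: your write-up is more scrupulous than the paper's, and the extra bookkeeping you flag is work that the paper's own proof leaves implicit.
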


\begin{proof}
Let $\left( 
\mathbf{m},\mathbf{H,F}\right)$ be a $N_{3,4}$-data. Since $H_i$ has finite index on $N_{3,4}$ we can assume $K \leqslant \cap_{i=1}^{s}H_i$. 

Assume that there exist $f_i \in \mathbf{F}$ such that 
$det(N_{f_i})=0$. Then, by  Proposition \ref{prop3}, there is a nontrivial subgroup $W$ of $K$ which is normal in $G$ and $W\leqslant Ker(f).$
Define $U=W \cap Z(K)$. Then $U$ is a non-trivial subgroup of $\cap_{i=1}^{s}H_i$ normal in $G$ and $f_i$-invariant for all $i=1,\dots,s$. If each $f_i \in H$ is such that $det(N_{f_i})\neq 0$, then 
 by Proposition \ref{prop2}, $Z(K)^{f_i} \leqslant Z(K),$ and  $Z(K)$ is a non-trivial subgroup of $\cap_{i=1}^{s}H_i$ normal in $G$ which is $f_i$-invariant for all $i=1,\dots,s$. Therefore, $G$ cannot have a faithful state-closed representation. 
\end{proof}

\noindent\textbf{Acknowledgments}

\noindent The authors would like to thank  Said Sidki, and Alex Dantas
for fruitful discussions during the preparation of the paper.

\end{document}